\documentclass[11pt,a4paper]{article}
\usepackage[T1]{fontenc}
\usepackage[utf8]{inputenc}
\usepackage{lmodern}
\usepackage[a4paper,margin=1in]{geometry}
\usepackage{amsmath,amssymb,amsthm,mathrsfs}
\usepackage{microtype}
\usepackage{xcolor}
\usepackage{aliascnt}
\usepackage[hidelinks]{hyperref}
\usepackage[noabbrev,nameinlink]{cleveref}
\numberwithin{equation}{section}
\newtheorem{theorem}{Theorem}[section]
\newaliascnt{lemma}{theorem}
\newtheorem{lemma}[lemma]{Lemma}
\aliascntresetthe{lemma}
\newaliascnt{proposition}{theorem}
\newtheorem{proposition}[proposition]{Proposition}
\aliascntresetthe{proposition}
\newaliascnt{corollary}{theorem}
\newtheorem{corollary}[corollary]{Corollary}
\aliascntresetthe{corollary}
\theoremstyle{remark}
\newaliascnt{remark}{theorem}

\aliascntresetthe{remark}
\crefname{theorem}{Theorem}{Theorems}
\crefname{lemma}{Lemma}{Lemmas}
\crefname{proposition}{Proposition}{Propositions}
\crefname{corollary}{Corollary}{Corollaries}
\crefname{remark}{Remark}{Remarks}
\crefname{section}{Section}{Sections}
\newcommand{\step}[1]{\par\smallskip\noindent\textit{#1}\enspace}
\allowdisplaybreaks[2]
\hypersetup{pdftitle={Observability from Measurable Sets for One-Dimensional Heat Equations with Bounded Spacetime Potentials}}
\title{Observability from Measurable Sets for One-Dimensional Heat Equations with Bounded Spacetime Potentials\thanks{This work was partially supported by the National Natural Science Foundation of China under Grant No.~12601883.}}
\author{Xiaomin Zhu\thanks{Civil Aviation Flight University of China, Deyang 618307, China (xiaominzhu123@yeah.net).}}
\date{}
\begin{document}
\maketitle

\begin{abstract}
This paper study observability for the Dirichlet heat equation on a bounded interval
with a real bounded potential depending on space and time. We prove an
observability inequality from every measurable subset of spacetime with
positive measure. The constant is uniform over potentials with a prescribed
$L^\infty$ bound. By duality, positive spacetime measure is equivalent to null
controllability by square-integrable distributed controls. Controls can also
be chosen bounded in time with values in $L^2$. If almost every spatial slice
of the observation set has a fixed positive lower bound on its measure, the
observability constant is bounded above by $C_0e^{C_1/T}$, uniformly in the
locations of the slices. The proof combines observation estimates on
finite-dimensional spaces transported by the evolution with decay estimates
for the distance to those spaces.
\end{abstract}
\par\smallskip\noindent\textbf{Keywords.} Heat equation; measurable observation
sets; bounded potentials; null controllability.\par
\par\smallskip\noindent\textbf{Mathematics Subject Classification (2020).}
93B07, 93B05, 35K10.\par

\section{Introduction}\label{sec:introduction}

A zeroth-order term in a heat equation can describe local absorption
or growth in a diffusive medium. When the local rate varies with the
position and changes during the evolution, the corresponding
coefficient depends on both space and time. Such spacetime-dependent
potentials arise, for example, in heat-conduction models involving
moving inhomogeneities; see \cite{ChowItoZou2018}. Motivated by such models, we study a heat equation with a general
bounded spacetime potential.

Let $I=(0,L)$, where $L>0$, and let $T>0$. We consider
\begin{equation}\label{eq:controlled}
 \begin{cases}
 \partial_t y-\partial_x^2y+b(t,x)y=\mathbf{1}_D u
     &\text{in }(0,T)\times I,\\
 y(t,0)=y(t,L)=0 &\text{for }t\in(0,T),\\
 y(0)=y_0 &\text{in }I.
 \end{cases}
\end{equation}
Here $b\in L^\infty((0,T)\times I;\mathbb R)$,
$D\subset(0,T)\times I$ is measurable, and
$y_0\in L^2(I;\mathbb R)$.
The control $u\in L^2(D)$ is extended by zero outside $D$.
We impose no regularity on $b$ beyond boundedness and measurability
in the two variables.

How much of spacetime must be observed to recover the final state of
such a heat evolution? By control duality, the same question asks
where a distributed control can act while still driving every
initial state to zero. For the free heat equation, observability is
known from arbitrary spacetime measurable sets of positive measure,
while parabolic equations with spacetime-dependent lower-order
coefficients can be observed from a measurable set of times and a
fixed open spatial set. These results lead naturally to the question
studied here: does positive spacetime measure remain sufficient when
the potential is merely bounded and measurable in both space and
time?

This question combines two difficulties. The observation set may
have spatial slices with empty interior and need not contain a
product of two positive-measure sets. At the same time, a general
bounded potential \(b(t,x)\) gives no spatial analyticity and no fixed
spectral decomposition that is preserved by the evolution.

Two lines of previous work are particularly relevant to this question.
The first concerns observability for parabolic equations with
time-dependent lower-order terms.
Phung and Wang \cite{PW} considered
\[
 \partial_t u-\Delta u+a(x,t)u+\beta(x,t)\cdot\nabla u=0
\]
on bounded convex domains. Their zeroth-order coefficient may depend
on both space and time, and they proved observability from
$E\times\omega$, where $E\subset(0,T)$ has positive measure and
$\omega$ is a fixed nonempty open subset of the spatial domain.
Phung et al.~\cite[Theorem~4]{PWZ2014} also established
measurable-time observability for heat equations with bounded
spacetime potentials, with explicit dependence on the potential norm.
More recently, Zhu and Zhuge \cite{ZZ} studied the dependence of
observability constants on time-dependent potentials and obtained
refined estimates for Lipschitz spacetime potentials.
In these time-dependent results, the spatial observation region
remains open.

The second line concerns observation from measurable sets.
For the free heat equation, Zhang \cite{Zhang2012} proved
observability from a product of measurable sets of positive measure.
Apraiz et al.~\cite{AEWZ} went further
and established observability from an arbitrary measurable subset
of spacetime with positive measure. Thus, for the free heat equation,
no product structure of the observation set is required.
For parabolic equations with analytic coefficients, Escauriaza et al.~\cite{EMZ2015,EMZ2017} obtained measurable-set
observability from quantitative analyticity estimates.
For equations with a potential, measurable spatial observation sets
are also available in some autonomous settings.
In particular, Zhu and Zhuge \cite[Theorem~1.5]{ZZ} proved in one
dimension an observability inequality from $E\times\omega$, with
both $E$ and $\omega$ of positive measure, for bounded
time-independent potentials $b=b(x)$.
Related measurable-set results for stationary operators can be found
in \cite{AE2008,AE2013,BM2023,LBM}.

The present paper combines these two directions.
We keep the general spacetime geometry of
\cite{AEWZ}, while allowing a real potential
$
 b=b(t,x)\in L^\infty((0,T)\times I)
$
with arbitrary measurable dependence on both variables.
More precisely, we prove that every measurable set
$D\subset(0,T)\times I$ with $|D|>0$ is an observation set for the
one-dimensional heat equation with such a potential.
For every $M\geq0$, the observability constant can be chosen uniformly
for all $b$ satisfying $\|b\|_{L^\infty}\leq M$.
By duality, this yields null controllability of
\eqref{eq:controlled} from every positive-measure spacetime set.
We also obtain controls in $L^\infty(0,T;L^2(I))$.
If, in addition, the spatial slices
\[
 D_t=\{x\in I:(t,x)\in D\}
\]
satisfy $|D_t|\geq m>0$ for almost every $t$, the observability
constant admits an upper bound of the form
$
 C_0 e^{C_1/T},
$
uniformly in the locations and measurable motion of the slices.

The main difficulty is caused by the interaction between the
spacetime potential and the spectral decomposition.
For a time-independent potential $b=b(x)$, the spectral subspaces
of the operator $-\partial_x^2+b(x)$ are preserved by the
evolution. Observation on low frequencies can then be combined
with the decay of the complementary high frequencies.
This structure is lost for a general $b(t,x)$.
Indeed, multiplication by $b(t,x)$ transfers energy between fixed
Dirichlet modes. Hence the equation for the high-frequency
component contains a forcing term generated by the low-frequency
component, and the usual high-frequency decay estimate is no longer
closed.
One might instead use the spectral subspaces of
$-\partial_x^2+b(t,\cdot)$ at each time, but these subspaces need not
be compatible with the evolution. Moreover, the assumption
$b\in L^\infty$ gives no time regularity with which to differentiate
the corresponding spectral projections.

To overcome this difficulty, we replace fixed spectral subspaces by
finite-dimensional subspaces transported by the actual evolution.
Starting from the span of the first $N$ Dirichlet eigenfunctions,
we evolve this space with the equation itself.
The resulting moving subspaces are compatible with the
nonautonomous dynamics by construction and require no time
derivatives of $b$.
We prove a measurable-set sampling estimate on these spaces, with
cost of order $e^{CN}$, and a decay estimate of order
$e^{-cN^2(t-s)}$ for the distance of a solution to the moving
subspace.
Balancing these two estimates gives a one-time interpolation
inequality.
A spatial slicing argument and the telescoping series method of
\cite{PW,AEWZ,WZ2017} then give observability from the full
spacetime set $D$.

The rest of the paper is organized as follows.
\cref{sec:evolution} states the main results and introduces
the evolution framework.
\cref{sec:subspaces} establishes the observation and decay
estimates for the moving subspaces.
\cref{sec:observability} combines these estimates to prove
observability and null controllability, and ends with examples of
measurable observation sets.

\section{Main results and the moving-subspace framework}\label{sec:evolution}
We state the main observability and controllability results, then
introduce the evolution family and the moving subspaces used in their
proofs.

Throughout, $|\cdot|$ denotes Lebesgue measure. Set
\[
H=L^2(I;\mathbb R),\qquad V=H_0^1(I).
\]
We say that \eqref{eq:controlled} is null-controllable on $D$ if, for every
$y_0\in H$, there exists a control $u\in L^2(D)$ such that $y(T)=0$.
The corresponding adjoint equation is
\begin{equation}\label{eq:adjoint}
 \begin{cases}
 -\partial_t p-\partial_x^2p+b(t,x)p=0
       &\text{in }(0,T)\times I,\\
 p(t,0)=p(t,L)=0 &\text{for }t\in(0,T),\\
 p(T)=q &\text{in }I,
 \end{cases}
 \qquad q\in H.
\end{equation}

\begin{theorem}\label{thm:main}
Let $D\subset(0,T)\times I$ be measurable and let $M\geq0$.
If $|D|>0$, then there exists a constant
$
C=C(L,T,M,D)>0
$
such that, for every real-valued
$b\in L^\infty((0,T)\times I)$ with
$\lVert b\rVert_{L^\infty((0,T)\times I)}\leq M$,
every solution of \cref{eq:adjoint} satisfies
\begin{equation}\label{eq:observability}
 \lVert p(0)\rVert_H^2
 \leq C\int_D |p(t,x)|^2\,\mathrm{d}t\,\mathrm{d}x.
\end{equation}
Consequently, for every $y_0\in H$, \cref{eq:controlled} admits a control
$u\in L^2(D)$ such that
\begin{equation}\label{eq:control-bound}
 y(T)=0,\qquad
 \lVert u\rVert_{L^2(D)}
 \leq C^{1/2}\lVert y_0\rVert_H.
\end{equation}

If $|D|=0$, then \cref{eq:controlled} is not null-controllable for any
real-valued bounded potential $b$. Hence, for every fixed
$b\in L^\infty((0,T)\times I;\mathbb R)$,
null controllability on $D$ is equivalent to $|D|>0$.
\end{theorem}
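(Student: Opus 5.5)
The plan follows the strategy announced in the introduction: a one-time interpolation inequality on moving subspaces, then the telescoping series method of \cite{PW,AEWZ}, then duality. Throughout, $U(t,s)$ denotes the backward evolution of \eqref{eq:adjoint}. It is convenient to reverse time, $\tau=T-t$, so that $p$ solves a forward heat equation with potential $\tilde b(\tau,x)=b(T-\tau,x)$ and $\|\tilde b\|_\infty\le M$.

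\textbf{Step 1: one-time interpolation.} Let $E_N=\mathrm{span}\{\sin(k\pi x/L):k\le N\}$ and transport it by the evolution, setting $E_N(\tau;s)=U(\tau,s)E_N$ for $\tau\ge s$. I would prove two estimates, uniformly in $\|b\|_\infty\le M$.

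First, a sampling estimate on moving subspaces. For $\omega\subset I$ measurable with $|\omega|\ge\gamma$, and $\tau-s\le 1/N^2$ (so the transported space is still close to $E_N$), every $v\in E_N(\tau;s)$ satisfies $\|v\|_H\le e^{C(1+N)}\|v\|_{L^2(\omega)}$. This comes from the Turán/Nazarov type bound $\|\phi\|_{L^2(I)}\le (C/\gamma)^{CN}\|\phi\|_{L^2(\omega)}$ for trigonometric polynomials $\phi\in E_N$. We then add a perturbation argument: $v=U(\tau,s)\phi$ differs from $e^{(\tau-s)\partial_x^2}\phi$ by $O((\tau-s)M)\|\phi\|$, and $e^{(\tau-s)\partial_x^2}\phi\in E_N$. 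The perturbation is absorbed as long as $(\tau-s)Me^{CN}$ is small, or else we compare at a single time with a slightly different $N$.

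Second, a decay estimate. For any solution, $\mathrm{dist}(p(\tau),E_N(\tau;s))\le e^{-cN^2(\tau-s)+M(\tau-s)}\|p(s)\|$. To get it, write $p(s)=\phi+r$ with $r\perp E_N$, so that $U(\tau,s)r$ bounds the distance. Closing the energy estimate for $U(\tau,s)r$ without a spectral gap is the delicate point. I would use the fact that, for $r\perp E_N$, the Rayleigh quotient satisfies $\|\partial_x r\|^2\ge (N+1)^2\pi^2/L^2\,\|r\|^2$ initially, together with a log-convexity / frequency-function argument showing that the Dirichlet quotient of $U(\tau,s)r$ cannot decrease by more than $2M$. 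This gives $\frac{d}{d\tau}\log\|w\|^2\le -2(\lambda_{N+1}-CM)$.

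Combining the two estimates with $\tau-s\sim$ a fixed fraction of the interval, and optimizing $N$ against a small parameter $\varepsilon$ (the usual $e^{CN}$ versus $e^{-cN^2\delta}$ balance), gives the interpolation inequality
\[
\|p(\tau)\|_H\le \bigl(e^{C/\delta}\|p(\tau)\|_{L^2(\omega)}\bigr)^{\theta}\|p(\tau-\delta)\|_H^{1-\theta}.
\]

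\textbf{Step 2: slicing and telescoping.} By Fubini, since $|D|>0$ there exist $\gamma>0$ and a measurable set $\mathcal E=\{t:|D_t|\ge\gamma\}$ with $|\mathcal E|>0$. Pick a Lebesgue density point of $\mathcal E$. Integrating the interpolation inequality over $t\in\mathcal E\cap(\ell_{k+1},\ell_k)$ for the standard geometric sequence $\ell_k\downarrow$ toward the density point, using Young's inequality and the energy bound $\|p(t)\|\le e^{MT}\|p(\ell)\|$, yields
\[
\|p(\ell_{k+1})\|^2 e^{-C/(\ell_k-\ell_{k+1})}-\|p(\ell_k)\|^2e^{-C'/(\ell_k-\ell_{k+1})}\le C\int_{\ell_{k+1}}^{\ell_k}\!\!\int_{D_t}|p|^2 .
\]
Summing telescopically, with the AEWZ choice of $\ell_k$ so that the exponents match, gives a bound on $\|p(t_0)\|$ for some $t_0$. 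The backward energy estimate then transfers this to $\|p(0)\|\le e^{MT}\|p(t_0)\|$, which is \eqref{eq:observability}. All constants depend on $b$ only through $M$.

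\textbf{Step 3: duality and the case $|D|=0$.} The HUM/Fenchel duality argument turns \eqref{eq:observability} into a control with $\|u\|\le C^{1/2}\|y_0\|$. If $|D|=0$, then $\mathbf 1_Du=0$ in $L^2$, so $y$ is the free solution. Taking $y_0=\sin(\pi x/L)$, backward uniqueness for parabolic equations with bounded potential, or simply the bound $\|y(T)\|\ge e^{-(\pi^2/L^2+M)T}\|y_0\|$ obtained from the Rayleigh quotient argument, gives $y(T)\ne0$.

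\textbf{Main obstacle.} I expect the decay estimate in Step 1 to be the hard part. The Rayleigh quotient monotonicity has to hold with merely measurable-in-time $b$, so the argument must be run via Galerkin approximation with $\frac{d}{dt}$ taken in the weak sense. It also has to be made compatible with the sampling estimate, so that the dimension-$N$ space is transported only over time $\lesssim N^{-2}$. If the Rayleigh argument fails, I would instead use the Duhamel formula $w=e^{(\tau-s)\partial_x^2}r-\int e^{(\tau-\sigma)\partial_x^2}\tilde b w\,d\sigma$, projecting only the high-frequency part and handling the low-frequency leakage by enlarging the subspace to $E_{2N}$.
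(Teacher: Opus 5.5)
Your proposal has a genuine gap in Step 1: the sampling estimate and the decay estimate are never available for the same pair $(s,\tau)$.

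You obtain sampling on $E_N(\tau;s)=U(\tau,s)E_N$ by perturbing from $E_N$. As you note yourself, this needs $(\tau-s)Me^{CN}$ small, so it holds only for $\tau-s\lesssim e^{-CN}$, far below even the scale $N^{-2}$ you state. But $e^{-cN^2(\tau-s)}$ is a gain only when $\tau-s\gg N^{-2}$. Balancing $e^{CN}e^{-cN^2\delta}\le\varepsilon$ at a gap $\delta=\tau-s$ forces $N\gtrsim 1/\delta+\sqrt{\log(1/\varepsilon)/\delta}$, i.e.\ $\delta\gtrsim 1/N$. So ``$\tau-s\sim$ a fixed fraction of the interval'' contradicts the hypothesis of your sampling estimate, and comparing with a slightly different $N$ does not change the time scale.

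The missing idea is a sampling inequality $\lVert v\rVert_H\le e^{CN}\lVert v\rVert_{L^2(\omega)}$ on the transported space that is uniform in the transport time over all of $[0,T]$. The paper proves it by a Karlin--McGregor determinant comparison. The determinant $\Psi_N(t,x)=\sigma_N\det(f_j(t,x_i))$, with $f_j$ the evolved sine modes, solves an $N$-variable heat equation on the chamber $\mathscr C_N$. Its potential is $\sum_i\tilde b(t,x_i)\in[-NM,NM]$, its boundary values are zero, and its initial value is $\Phi_N$, a positive Dirichlet eigenfunction of $\mathscr C_N$ with eigenvalue $S_N$. Hence $e^{-NMt-S_Nt}\Phi_N\le\Psi_N\le e^{NMt-S_Nt}\Phi_N$. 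In the Cramer quotients for the cardinal functions the large factor $e^{-S_Nt}$ cancels. So the cardinal functions of the moving space are at most $e^{2NMt}$ times those of the sine modes for every $t$, and well-separated nodes in $\omega$ give the cost $e^{CN}$.

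Your route to the decay estimate also fails, although the estimate itself is true. Bounding the distance by $\lVert U(\tau,s)r\rVert_H$ with $r\perp E_N$ is too lossy. The claim that the Dirichlet quotient of $U(\tau,s)r$ stays above $\lambda_{N+1}-CM$ is false, because the potential feeds $r$ into low modes. Here is a counterexample:
\begin{itemize}
\item Take the time-independent potential $b_0=M\mathbf 1_{(0,L/2)}$ with $M>0$, let $\phi_1$ be the normalized ground state of $A+b_0$ with eigenvalue $\mu_1$, and set $r=(\operatorname{Id}-P_N)\phi_1$.
\item By self-adjointness, $\langle U(\tau,s)r,\phi_1\rangle_H=e^{-\mu_1(\tau-s)}\lVert r\rVert_H^2$, so $\lVert U(\tau,s)r\rVert_H\ge e^{-\mu_1(\tau-s)}\lVert r\rVert_H^2$.
\item Since $\phi_1''=(b_0-\mu_1)\phi_1$ jumps at $L/2$, one has $\lVert r\rVert_H\gtrsim N^{-5/2}$.
\item Hence $\lVert U(\tau,s)r\rVert_H$ exceeds $e^{-cN^2(\tau-s)+M(\tau-s)}\lVert r\rVert_H$ as soon as $\tau-s\gtrsim N^{-2}\log N$.
\end{itemize}
Your Duhamel/$E_{2N}$ fallback suffers from the same leakage, which is only polynomially small in $N$.

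The Dirichlet quotient of a forward solution is controlled only in the opposite direction: $\mathcal Q'\le M^2/2$, so it cannot grow much forward in time. It must therefore be applied to the adjoint solution $\sigma\mapsto U(\tau,\sigma)^*h$ on $[s,\tau]$. When $h\perp U(\tau,s)E_N$, its value at $\sigma=s$ is orthogonal to $E_N$. This gives $\lVert A^{1/2}h\rVert_H^2\ge(\lambda_{N+1}-M^2(\tau-s)/2)\lVert h\rVert_H^2$ on $V\cap(U(\tau,s)E_N)^\perp$.

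One then differentiates $\lVert(\operatorname{Id}-\Pi(\sigma))p(\sigma)\rVert_H^2$ directly, where $\Pi(\sigma)$ is the orthogonal projection onto $U(\sigma,s)E_N$. The term produced by the motion of the space lies in that space and is orthogonal to the remainder. So the energy estimate closes with rate at least $\lambda_{N+1}-M-M^2(\tau-s)/2$.

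Steps 2 and 3 are standard and go through once Step 1 is repaired in this way.
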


The control may depend on $b$, but the estimate in
\eqref{eq:control-bound} is uniform for
$\lVert b\rVert_\infty\leq M$. For a general measurable set $D$, the
observability constant may also depend on how the spatial sections of
$D$ are distributed in time.

For almost every $t\in(0,T)$, define
$
D_t=\{x\in I:(t,x)\in D\}.
$
If the measure of $D_t$ is uniformly bounded from below for almost every
$t$, then the observability constant can be estimated more explicitly.

\begin{theorem}\label{thm:short-time}
Fix $L,T_0>0$, $M\geq0$, and $m\in(0,L]$. There exist constants
$C_0,C_1>0$, depending only on $L,T_0,M$, and $m$, such that, whenever
$0<T\leq T_0$, $b\in L^\infty((0,T)\times I;\mathbb R)$ satisfies
$\lVert b\rVert_\infty\leq M$, and
$D\subset(0,T)\times I$ is measurable with
\[
|D_t|\geq m
\qquad\text{for a.e. }t\in(0,T),
\]
every solution of \cref{eq:adjoint} satisfies
\begin{equation}\label{eq:short-time}
 \lVert p(0)\rVert_H^2
 \leq C_0e^{C_1/T}
 \int_D|p(t,x)|^2\,\mathrm{d}t\,\mathrm{d}x.
\end{equation}
In particular, $C_0$ and $C_1$ are uniform over all measurable sets
$D$ satisfying the above slice condition.
\end{theorem}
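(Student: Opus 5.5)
The plan is to run the moving-subspace scheme described in the introduction, but now with a slice-wise interpolation inequality whose constants depend only on $L,T_0,M,m$. The telescoping series method of \cite{PW,AEWZ,WZ2017} then produces the factor $e^{C_1/T}$.

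Let $U(t,s)$ denote the backward evolution of \cref{eq:adjoint}. Fix $0\le s<t\le T$ and let $E_N$ be the span of the first $N$ Dirichlet eigenfunctions. Transport $E_N$ by the evolution to obtain the moving subspace $E_N(s)=U(s,t)E_N$. The first ingredient is a sampling estimate: for every $\varphi\in E_N(s)$ and every measurable $\omega\subset I$ with $|\omega|\ge m$,
\[
\|\varphi\|_H\le e^{C(1+N+M(t-s))}\|\varphi\|_{L^2(\omega)},
\]
with $C$ depending only on $L,m$. To prove it, reduce it to the Remez-type inequality for finite sums of sines, since such sums are restrictions of exponential polynomials of degree $N$. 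Then control the perturbation created by $b$ on the short interval using Duhamel's formula.

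The second ingredient is a decay estimate. Take the component of $p(t)$ orthogonal to $E_N$ and propagate it backward. Its distance to $E_N(s)$ should then be bounded by $e^{M(t-s)}e^{-cN^2(t-s)}\|p(t)\|_H$.

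Now split $p(s)=\varphi+r$ with $\varphi\in E_N(s)$ and $\|r\|\le e^{-cN^2(t-s)+M(t-s)}\|p(t)\|$. Apply the sampling estimate to $\varphi$ on $\omega=D_s$, and optimize over $N\approx K/(t-s)$. This gives the one-time interpolation inequality
\[
\|p(s)\|_H\le e^{C/(t-s)}\|p(s)\|_{L^2(D_s)}^{1-\theta}\|p(t)\|_H^{\theta},
\]
equivalently $\|p(s)\|\le \varepsilon\|p(t)\|+e^{C/((t-s)\varepsilon^{\gamma})}\|p(s)\|_{L^2(D_s)}$ for some $\theta\in(0,1)$ and $\gamma>0$. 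Here $C,\theta,\gamma$ depend only on $L,T_0,M,m$. The uniformity in the location of $D_s$ is exactly what the Remez estimate provides, because it depends only on $|D_s|$.

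Since $|D_s|\ge m$ for a.e. $s$, there is no need for a density-point selection of times. Choose a geometric sequence $l_k=T2^{-k}$. On $(l_{k+1},l_k)$, integrate the interpolation inequality in $s$, using the energy bounds $\|p(s)\|\le e^{M(t-s)}\|p(t)\|$ for $s<t$. This yields
\[
\|p(l_{k+1})\|\le \varepsilon_k\|p(l_{k-1})\|+e^{C/((l_k-l_{k+1})\varepsilon_k^{\gamma})}\|p\|_{L^2(D\cap((l_{k+1},l_k)\times I))}.
\]
Pick $\varepsilon_k$ so that multiplying by $e^{-C'/(l_k-l_{k+1})}$ produces a telescoping inequality. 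Summing over $k$ gives $\|p(0)\|\le C_0e^{C_1/T}\|p\|_{L^2(D)}$, with uniformity in $T\le T_0$ coming from the scaling of the $l_k$.

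The main obstacle is the decay estimate for the moving subspace. The high-frequency equation is forced by $b\varphi$, so the usual spectral argument does not close. I would first try a backward Duhamel construction: write $p(s)-U(s,t)P_Np(t)=U(s,t)(I-P_N)p(t)$. Then show that the image under $U(s,t)$ of $E_N^\perp$ lies within $e^{-cN^2(t-s)}$ of $E_N(s)$ up to an error of order $Me^{M(t-s)}$. To do this, absorb the coupling into the moving subspace via a graph-transform or fixed-point construction valid when $N^2\gtrsim M$. If that fails, fall back on a frequency function or log-convexity bound.
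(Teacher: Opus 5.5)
Your architecture matches the paper's: transported subspaces, a sampling bound with cost $e^{CN}$, decay $e^{-cN^2(t-s)}$ of the distance to those subspaces, a one-time interpolation inequality, then telescoping. But the sampling estimate cannot be obtained the way you propose. Since $b$ is only $L^\infty$, elements of $U(s,t)E_N$ are not exponential polynomials and have no analyticity in $x$, so no Remez inequality applies to them. In the Duhamel splitting $\varphi=e^{-(t-s)A}\psi+\rho$ with $\psi\in E_N$, the remainder $\rho$ does not lie in $E_N$ in general. The only available bound is $\lVert\rho\rVert_H\lesssim M(t-s)e^{M(t-s)}\lVert\psi\rVert_H$, while $\lVert\varphi\rVert_H$ can be as small as $e^{-(\lambda_N+M)(t-s)}\lVert\psi\rVert_H$. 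Absorbing $e^{CN}\lVert\rho\rVert_H$ would therefore force $M(t-s)\lesssim e^{-CN-\lambda_N(t-s)}$. That is incompatible with $N\gtrsim 1/(t-s)$, which you need to make $CN-cN^2(t-s)$ negative, and with $N\to\infty$ as $\varepsilon\to0$.

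The missing idea is non-perturbative and specific to one dimension. By multilinearity, $\Psi_N=\sigma_N\det(f_j(t,x_i))$ solves a heat equation on the Weyl chamber $\mathscr C_N$ with zero boundary values and potential $\sum_i c(t,x_i)\in[-NM,NM]$. The comparison principle (first for smooth $c$, then by approximation in $C([0,T];V)$) traps it between $e^{-NMt-S_Nt}\Phi_N$ and $e^{NMt-S_Nt}\Phi_N$. By Cramer's rule, the cardinal functions of $\mathcal E_N(t)$ are then at most $e^{2NMt}$ times those of the sine space. The latter are $\le e^{CN}$ once the nodes are chosen in $E$, away from the endpoints and separated by $\gtrsim|i-j|/N$ in the variable $\cos(\pi x/L)$. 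This gives cost $e^{(C_0+2MT)N}$; nothing in your proposal supports the additive form $e^{C(1+N+M(t-s))}$ with $C=C(L,m)$.

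The decay estimate, which you rightly call the main obstacle, is left as a plan. A cone-invariance or graph-transform argument using the consecutive gap $\lambda_{N+1}-\lambda_N>2M$ could plausibly be made to work, but it would have to be carried out. The paper's route is shorter. With $\Pi_N(r)$ the orthogonal projection onto $\mathcal E_N(r)$, the distance $w=(\operatorname{Id}-\Pi_N)z$ satisfies $w'=-(A+c)w-F_Na'$. Since $F_Na'\in\mathcal E_N(r)\perp w$, the coupling drops out of the energy identity. Coercivity $\lVert A^{1/2}h\rVert_H^2\ge(\lambda_{N+1}-M^2t/2)\lVert h\rVert_H^2$ on $V\cap\mathcal E_N(t)^\perp$ follows by evolving $h$ with the adjoint flow back to the base time, where it is orthogonal to $e_1,\dots,e_N$, and using $\mathcal Q_v'\le M^2/2$.

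Two further steps in your endgame are wrong as written. First, the $\varepsilon$-form is not equivalent to the H\"older form. Young's inequality gives a cost $C_\theta e^{C'/(t-s)}\varepsilon^{-\gamma}$. By contrast, $e^{C/((t-s)\varepsilon^{\gamma})}$ with $\varepsilon_k=e^{-C'/\delta_k}$ is doubly exponential and cannot be absorbed by weights $e^{-C''/\delta_k}$.

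Second, the telescoping runs in the wrong direction. With $l_k=T2^{-k}\to0$ in adjoint time, your chain $\lVert p(l_{k+1})\rVert_H\le\varepsilon_k\lVert p(l_{k-1})\rVert_H+\dots$ starts at $t=T$ and accumulates at the output end $t=0$. Unrolling it yields only $\lVert p(l_n)\rVert_H\le\delta_n\lVert p(T)\rVert_H+K_n\lVert p\rVert_{L^2(D)}$, where the last step alone contributes a cost of order $e^{C/l_n}\to\infty$. So the unobserved term $\lVert p(T)\rVert_H$ is never removed. The accumulation point must lie on the input side: toward $t=T$ in adjoint time, or $t_j\downarrow\ell$ in the paper's reversed time. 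There the norms are bounded by $e^{MT}\lVert p(T)\rVert_H$ and the weights $e^{-3K/r_j}\to0$ kill them. The energy estimate then carries the bound to $p(0)$.
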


The slice condition in \cref{thm:short-time} gives an explicit dependence
of the observability constant on $T$, whereas \cref{thm:main} only
requires $|D|>0$. We now introduce the evolution family and the moving
finite-dimensional spaces used in the proofs.

Let $c\in L^\infty((0,T)\times I)$ be real and satisfy
$\lVert c\rVert_\infty\leq M$. Set
\[
A=-\partial_x^2,\qquad D(A)=H^2(I)\cap H_0^1(I),
\]
and denote by $U_c(t,s)$ the solution operator for
\begin{equation}\label{eq:forward}
 z'+Az+c(t,\cdot)z=0,\qquad z(s)=z_s\in H.
\end{equation}
The standard energy argument gives a unique solution
\[
z\in L^2(s,T;V)\cap H^1(s,T;V')
   \subset C([s,T];H),
\]
together with
\begin{equation}\label{eq:energy-growth}
 \lVert U_c(t,s)\rVert_{\mathcal L(H)}
 \leq e^{M(t-s)},\qquad 0\leq s\leq t\leq T.
\end{equation}

We also record the regularity needed for the differentiations below.
If
\[
v'+Av=g\in L^2(0,T;H),\qquad v(0)\in V,
\]
then
\begin{equation}\label{eq:strong-regularity}
 v\in H^1(0,T;H)\cap L^2(0,T;D(A))\cap C([0,T];V),
\end{equation}
and
\begin{equation}\label{eq:regularity-identity}
 \begin{aligned}
 \int_0^t
 \bigl(\lVert v'\rVert_H^2+\lVert Av\rVert_H^2\bigr)\,\mathrm{d}r
 +\lVert A^{1/2}v(t)\rVert_H^2
 =\lVert A^{1/2}v(0)\rVert_H^2
 +\int_0^t\lVert g\rVert_H^2\,\mathrm{d}r.
 \end{aligned}
\end{equation}
This follows by applying the identity to finite spectral truncations and
passing to the limit. In particular,
$t\mapsto\lVert A^{1/2}v(t)\rVert_H^2$ is absolutely continuous and
\[
\frac{\mathrm d}{\mathrm dt}
\lVert A^{1/2}v(t)\rVert_H^2
=2\langle Av(t),v'(t)\rangle_H
\]
for almost every $t$. For \eqref{eq:forward}, we take $g=-cv$.
For \eqref{eq:forward} with $z_s\in H$, the same regularity holds on
$[s+\varepsilon,T]$ for every $0<\varepsilon<T-s$.

We now use this evolution framework to choose the finite-dimensional
spaces for the observation argument. Let
\[
e_j(x)=\sqrt{2/L}\sin(j\pi x/L),\qquad
\lambda_j=(j\pi/L)^2,
\]
be the Dirichlet eigenfunctions and eigenvalues, and let $P_N$ be the
orthogonal projection onto
$\operatorname{span}\{e_1,\ldots,e_N\}$.
For
$
w=(\operatorname{Id}-P_N)z,
$
equation \eqref{eq:forward} gives
\begin{equation}\label{eq:fixed-complement}
 w'+Aw+(\operatorname{Id}-P_N)(cw)
 =-(\operatorname{Id}-P_N)(cP_Nz).
\end{equation}
Thus the fixed low- and high-frequency components remain coupled through
the multiplication operator $c(t,\cdot)$. In particular, the usual
Dirichlet spectral subspaces are not preserved by the evolution when
$c$ depends on $x$.

For this reason, we transport the initial Dirichlet modes by the full
evolution and set
\begin{equation}\label{eq:columns}
 f_j(t)=U_c(t,0)e_j,\qquad
 \mathcal{E}_N(t)
 =\operatorname{span}\{f_1(t),\ldots,f_N(t)\}.
\end{equation}
By \eqref{eq:strong-regularity} and the embedding
$V\hookrightarrow C([0,L])$, each $f_j$ has a jointly continuous
representative on $[0,T]\times[0,L]$, which we use for pointwise
evaluation. The composition law for $U_c$ yields
\begin{equation}\label{eq:space-invariance}
 U_c(t,s)\mathcal{E}_N(s)=\mathcal{E}_N(t).
\end{equation}
Moreover,
\[
\mathcal E_N(t)\subset \mathcal E_{N+1}(t).
\]
These spaces are determined by the evolution itself and require no time
regularity of $c$. Their invariance is the main reason for replacing the
fixed spectral subspaces by moving ones. They will be used below to
derive the finite-dimensional observation estimate and the complementary
decay estimate.

\section{Observation and decay estimates}\label{sec:subspaces}
We prove a sampling estimate on $\mathcal E_N(t)$ and a decay estimate
for the distance of a solution to this space. For the sampling
estimate, we compare the evaluation determinant of the evolved basis
$f_1(t),\ldots,f_N(t)$ with that of the first $N$ Dirichlet
eigenfunctions. This comparison is related to the total positivity of
one-dimensional diffusion kernels studied by Karlin--McGregor
\cite{KM1960} and to interpolation formulas in Barrow
\cite{Barrow1977}.

Let
\[
 \mathscr C_N
 =\{(x_1,\ldots,x_N)\in I^N:0<x_1<\cdots<x_N<L\},
 \qquad
 S_N=\sum_{j=1}^N\lambda_j.
\]
Using
\[
 \sin(j\theta)=\sin\theta\,U_{j-1}(\cos\theta),
\]
where $U_{j-1}$ is a polynomial of degree $j-1$ with nonzero leading
coefficient, we see that
$
 \det\bigl(e_j(x_i)\bigr)_{i,j=1}^N
$
is a nonzero constant times
\[
 \prod_{i=1}^N\sin(\pi x_i/L)
 \prod_{1\leq i<j\leq N}
 \bigl(\cos(\pi x_j/L)-\cos(\pi x_i/L)\bigr).
\]
Hence this determinant has a fixed sign in $\mathscr C_N$. We choose
$\sigma_N\in\{-1,1\}$ so that
\begin{equation}\label{eq:positive-determinant}
 \Phi_N(x_1,\ldots,x_N)
 =\sigma_N\det\bigl(e_j(x_i)\bigr)_{i,j=1}^N>0
 \qquad\text{in }\mathscr C_N.
\end{equation}
The same formula shows that $\Phi_N=0$ on $\partial\mathscr C_N$.
Since $-e_j''=\lambda_j e_j$, expansion of the determinant also gives
\begin{equation}\label{eq:configuration-ground-state}
 -\sum_{i=1}^N\partial_{x_i}^2\Phi_N
 =S_N\Phi_N
 \qquad\text{in }\mathscr C_N.
\end{equation}

We now compare the reference determinant $\Phi_N$ with the
corresponding determinant for the evolved basis.

\begin{lemma}\label{lem:determinant}
For every $N\geq1$, $t\in[0,T]$, and
$x=(x_1,\ldots,x_N)\in\mathscr C_N$,
\begin{equation}\label{eq:determinant-comparison}
 e^{-NMt-S_Nt}\Phi_N(x)
 \leq
 \sigma_N\det\bigl(f_j(t,x_i)\bigr)_{i,j=1}^N
 \leq
 e^{NMt-S_Nt}\Phi_N(x).
\end{equation}
In particular, for any $N$ distinct points
$x_1,\ldots,x_N\in I$, the evaluation map
\[
 v\longmapsto \bigl(v(x_1),\ldots,v(x_N)\bigr)
\]
is an isomorphism from $\mathcal E_N(t)$ onto $\mathbb R^N$.
Hence
$
 \dim\mathcal E_N(t)=N.
$
\end{lemma}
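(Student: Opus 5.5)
The plan is to view $W(t,x)=\sigma_N\det\bigl(f_j(t,x_i)\bigr)_{i,j=1}^N$ as a function on $[0,T]\times\overline{\mathscr C_N}$ and to compare it with $e^{-S_Nt}\Phi_N$ by a parabolic comparison principle in $N$ space dimensions.

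Each column $x\mapsto f_j(t,x_i)$ solves the scalar equation $\partial_t f-\partial_x^2f+c(t,x)f=0$ in the variable $x_i$. Since the determinant is multilinear, we apply the operator $\partial_t-\sum_i\partial_{x_i}^2$ row by row (row $i$ depends only on $x_i$). This gives the linear equation
\[
\partial_tW-\sum_{i=1}^N\partial_{x_i}^2W+\Bigl(\sum_{i=1}^N c(t,x_i)\Bigr)W=0
\qquad\text{in }(0,T)\times\mathscr C_N,
\]
with potential $\tilde c(t,x)=\sum_i c(t,x_i)$ satisfying $|\tilde c|\leq NM$. The boundary values are as follows.
\begin{itemize}
\item At $t=0$, $W(0)=\Phi_N$.
\item On $\partial\mathscr C_N$, $W=0$: either some $x_i\in\{0,L\}$, where every $f_j$ vanishes, or $x_i=x_{i+1}$, which gives two equal rows.
\end{itemize}

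Next we check the comparison functions. By \eqref{eq:configuration-ground-state}, $\Psi_\pm(t,x)=e^{\pm NMt-S_Nt}\Phi_N(x)$ satisfies
\[
\partial_t\Psi_\pm-\Delta\Psi_\pm+\tilde c\,\Psi_\pm=(\pm NM+\tilde c)\Psi_\pm,
\]
which is $\geq0$ for $\Psi_+$ and $\leq0$ for $\Psi_-$, since $\Phi_N>0$. So $\Psi_+$ is a supersolution and $\Psi_-$ a subsolution, both with the same initial and boundary data as $W$.

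The weak maximum principle on the bounded domain $\mathscr C_N$ is applied to $\Psi_+-W$ and $W-\Psi_-$. Concretely, we multiply by the negative part, integrate over $\mathscr C_N$, and use $|\tilde c|\leq NM$ with Gronwall. This yields $\Psi_-\leq W\leq\Psi_+$, which is \eqref{eq:determinant-comparison}.

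The main obstacle is justifying this computation, because $c$ is only $L^\infty$ and the $f_j$ are not classical solutions. We would proceed as follows.
\begin{itemize}
\item By \eqref{eq:strong-regularity}, each $f_j\in H^1(0,T;H)\cap L^2(0,T;D(A))$. Hence $W$, a sum of products of functions of separate variables, lies in $H^1(0,T;L^2(\mathscr C_N))\cap L^2(0,T;H^2(\mathscr C_N))$.
\item The equation for $W$ then holds a.e. in $(0,T)\times\mathscr C_N$.
\item $W$ vanishes on $\partial\mathscr C_N$ in the trace sense, since each $f_j(t)\in H_0^1$ and antisymmetry gives vanishing on the diagonal faces. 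So $W(t)\in H_0^1(\mathscr C_N)$ and $(\Psi_+-W)^-\in H_0^1$.
\item The Stampacchia energy argument then needs only $L^\infty$ bounds on $\tilde c$.
\item $W$ is continuous in $t$ into $L^2$, and $W(0)=\Phi_N$ in $L^2$. This is enough for the energy argument, and pointwise equality follows from the joint continuity of the representatives.
\end{itemize}
If $H^2$ regularity up to $t=0$ causes trouble, we would instead argue on $[\varepsilon,T]$ and let $\varepsilon\to0$. Alternatively, we could mollify $c$, use the classical maximum principle, and pass to the limit via \eqref{eq:energy-growth}.

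For the remaining claims, fix distinct points $x_1,\ldots,x_N\in I$ and reorder them so that $x\in\mathscr C_N$. The lower bound gives $\det\bigl(f_j(t,x_i)\bigr)\neq0$. So the evaluation map applied to $f_1(t),\ldots,f_N(t)$ yields linearly independent vectors in $\mathbb R^N$. Hence the $f_j(t)$ are linearly independent, $\dim\mathcal E_N(t)=N$, and the evaluation map is a bijective linear map $\mathcal E_N(t)\to\mathbb R^N$.
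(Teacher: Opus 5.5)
Your proposal is correct. Its core is the same as the paper's: multilinearity gives the $N$-variable equation with potential $\sum_i c(t,x_i)$, the determinant vanishes on $\partial\mathscr C_N$ and equals $\Phi_N$ at $t=0$, and $e^{\pm NMt-S_Nt}\Phi_N$ serve as super- and subsolutions. The difference is in how you handle a merely bounded $c$.

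The paper applies the classical comparison principle only for smooth $c$, and then mollifies with $\lVert c_k\rVert_\infty\leq M$ and $c_k\to c$ a.e. It gets $f_j^{(k)}\to f_j$ in $C([0,T];H)$ from the energy estimate for the difference, which has forcing $(c_k-c)f_j$, together with dominated convergence. It then upgrades this via \eqref{eq:regularity-identity} to $C([0,T];V)$, hence to uniform convergence of the determinants.

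Your primary route skips approximation entirely. Since $e_j\in V$, \eqref{eq:strong-regularity} holds up to $t=0$, so $W\in H^1(0,T;L^2(\mathscr C_N))\cap L^2(0,T;H^2(\mathscr C_N))$; the mixed derivatives $\partial_{x_k}\partial_{x_l}$ are controlled by $f_j\in C([0,T];V)$. The equation then holds a.e. Moreover, $W(t)\in H^1\cap C(\overline{\mathscr C_N})$ vanishes on the boundary of the convex chamber, hence lies in $H_0^1(\mathscr C_N)$. The Stampacchia test with $(\Psi_+-W)^-$ and $(W-\Psi_-)^-$ then uses only $|\tilde c|\leq NM$.

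So you trade the approximation argument for a check of the Sobolev regularity and the $H_0^1$ property of the determinant in $N$ variables. The paper instead keeps the $N$-dimensional step classical and confines the rough-coefficient analysis to a one-dimensional convergence argument.

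Your fallback option is exactly the paper's route. There, \eqref{eq:energy-growth} by itself does not give convergence; you need the difference estimate described above. However, $C([0,T];H)$ convergence would already suffice: the inequalities pass to a.e. limits along a subsequence, and $W(t,\cdot)$ is continuous.
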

\begin{proof}
Assume first that $c$ is smooth and define
\[
 \Psi_N(t,x)
 =\sigma_N\det\bigl(f_j(t,x_i)\bigr)_{i,j=1}^N.
\]
By multilinearity of the determinant and \eqref{eq:forward},
\begin{equation}\label{eq:determinant-equation}
 \partial_t\Psi_N
 -\sum_{i=1}^N\partial_{x_i}^2\Psi_N
 +\Bigl(\sum_{i=1}^N c(t,x_i)\Bigr)\Psi_N=0
 \quad\text{in }(0,T)\times\mathscr C_N.
\end{equation}
If $x_i=x_{i+1}$, two rows of the determinant coincide. If
$x_1=0$ or $x_N=L$, one row vanishes. Hence
\[
 \Psi_N=0
 \quad\text{on }(0,T)\times\partial\mathscr C_N,
 \qquad
 \Psi_N(0,\cdot)=\Phi_N.
\]
Since
\[
 -NM\leq \sum_{i=1}^N c(t,x_i)\leq NM,
\]
and $\Phi_N$ satisfies \eqref{eq:configuration-ground-state}, the
functions
$e^{-NMt-S_Nt}\Phi_N$ and
 $e^{NMt-S_Nt}\Phi_N$
are respectively a subsolution and a supersolution of
\eqref{eq:determinant-equation}. The comparison principle on
$\mathscr C_N$ therefore gives
\eqref{eq:determinant-comparison}.

It remains to remove the smoothness assumption on $c$. Extend $c$ by
zero and mollify it to obtain smooth real coefficients $c_k$ such that
\[
 \lVert c_k\rVert_\infty\leq M,
 \qquad
 c_k\to c\quad\text{a.e. in }(0,T)\times I.
\]
Let $f_j^{(k)}$ denote the solution corresponding to $c_k$. For fixed
$j$, set
$
 w_k=f_j^{(k)}-f_j.
$
Then
\[
 w_k'+Aw_k+c_kw_k=-(c_k-c)f_j,
 \qquad
 w_k(0)=0.
\]
Since $f_j\in L^2(0,T;H)$, dominated convergence gives
\[
 (c_k-c)f_j\to0
 \qquad\text{in }L^2(0,T;H).
\]
The energy estimate then yields
\[
 w_k\to0
 \qquad\text{in }C([0,T];H).
\]
Consequently,
\[
 -c_kw_k-(c_k-c)f_j\to0
 \qquad\text{in }L^2(0,T;H).
\]
Applying \eqref{eq:regularity-identity} to
\[
 w_k'+Aw_k=-c_kw_k-(c_k-c)f_j
\]
gives
\[
 w_k\to0
 \qquad\text{in }C([0,T];V).
\]
Since $V\hookrightarrow C([0,L])$, we obtain
\[
 f_j^{(k)}\to f_j
 \qquad\text{uniformly on }[0,T]\times[0,L].
\]
For each fixed $N$, the corresponding evaluation determinants therefore
converge uniformly on $[0,T]\times[0,L]^N$. Passing to the limit in the
comparison obtained for $c_k$ proves
\eqref{eq:determinant-comparison} for bounded measurable $c$.

Finally, the lower bound in \eqref{eq:determinant-comparison} is strictly
positive for $x\in\mathscr C_N$. Hence the evaluation matrix is
invertible at any $N$ distinct interior points. It follows that the
evaluation map is an isomorphism and
\[
 \dim\mathcal E_N(t)=N.
\]
\end{proof}

The determinant estimate above gives a quantitative sampling inequality
on the moving spaces. In particular, the estimate is uniform over the
location of the observation set.
\begin{proposition}\label{prop:sampling}
Let $m\in(0,L]$. There exists $C>0$, depending only on
$L,M,T,m$, such that for every $N\geq1$, $t\in[0,T]$, every
measurable set $E\subset I$ with $|E|\geq m$, and every
$v\in\mathcal E_N(t)$,
\begin{equation}\label{eq:sampling}
 \lVert v\rVert_H
 \leq e^{CN}\lVert v\rVert_{L^2(E)}.
\end{equation}
\end{proposition}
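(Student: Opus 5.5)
The idea is to turn the determinant comparison of \cref{lem:determinant} into a Lagrange-type interpolation formula on $\mathcal E_N(t)$. We pick $N$ well-separated points in $E$ and control the interpolation weights by ratios of evaluation determinants. Fix $t$ and $v\in\mathcal E_N(t)$, and write $v=\sum_j a_jf_j(t)$; by \cref{lem:determinant} this representation is unique. For distinct points $x_1<\dots<x_N$ in $I$, Cramer's rule gives
\[
 v(y)=\sum_{i=1}^N v(x_i)\,\ell_i(y),\qquad
 \ell_i(y)=\frac{\Psi_N(t;x_1,\ldots,x_{i-1},y,x_{i+1},\ldots,x_N)}{\Psi_N(t;x_1,\ldots,x_N)},
\]
where $\Psi_N(t;\cdot)=\sigma_N\det(f_j(t,\cdot_i))$ is extended antisymmetrically to unordered tuples. \eqref{eq:determinant-comparison} bounds numerator and denominator by $\Phi_N$, up to factors $e^{\pm NMt-S_Nt}$. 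The large factor $e^{-S_Nt}$ cancels in the ratio, so
\[
 |\ell_i(y)|\le e^{2NMT}\,\frac{\Phi_N(x_1,\ldots,y,\ldots,x_N)}{\Phi_N(x_1,\ldots,x_N)}.
\]
This reduces everything to the explicit product formula for $\Phi_N$. With $\xi=\cos(\pi x/L)$, the ratio is bounded by $\sin(\pi y/L)/\sin(\pi x_i/L)$ times the classical Lagrange basis polynomial $\prod_{k\ne i}|\eta-\xi_k|/|\xi_i-\xi_k|$ on $[-1,1]$, where $\eta=\cos(\pi y/L)$.

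The task is then to choose the nodes inside $E$ so that these Lagrange polynomials are at most $e^{CN}$. My plan is to average over the nodes rather than fix them. Integrate the inequality
\[
 |v(y)|^2\le N e^{4NMT}\sum_i |v(x_i)|^2\ell_i(y)^2
\]
in $y$ over $I$. Then integrate in $(x_1,\dots,x_N)\in E^N$ against a weight such as $\Phi_N(x)^2$, restricted to $E^N\cap\mathscr C_N$ up to permutation. The factor $\Phi_N(x)^2$ in the denominator cancels against this weight. The node $x_i$ appearing in $|v(x_i)|^2$ then carries the weight $\int_{E^{N-1}}\Phi_N(\dots,y,\dots)^2$, which is bounded above by $\int_{I^{N-1}}$. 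That quantity is explicitly computable: by the Cauchy--Binet/Andréief identity it equals $(N-1)!$ times a polynomial-size quantity in $y$, because the $e_j$ are orthonormal. So the right-hand side is at most $C^N N!\,\|v\|_{L^2(E)}^2\|v\|\ldots$ — more precisely, at most $(N!)e^{CN}\|v\|^2_{L^2(E)}$. The left side is $\|v\|_H^2\int_{E^N}\Phi_N^2$. It therefore suffices to prove the lower bound
\[
 \int_{E^N}\Phi_N^2\ge N!\,e^{-CN}.
\]
This is a Gram determinant: it equals $N!\det\bigl(\langle e_j,e_k\rangle_{L^2(E)}\bigr)$. So the key estimate is the Remez-type bound
\[
 \det\bigl(\langle e_j,e_k\rangle_{L^2(E)}\bigr)_{j,k\le N}\ge e^{-CN},
\]
uniformly over $|E|\ge m$. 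Combined with the upper bound above, this gives \eqref{eq:sampling}.

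The main obstacle is this lower bound on the Gram determinant. Its eigenvalues are the squares of the optimal constants in $\|\sum a_je_j\|_{L^2(E)}\ge\mu\|a\|$. Via the Chebyshev substitution, trigonometric sine polynomials of degree $N$ reduce to algebraic polynomials of degree $\le N-1$ times $\sin\theta$. The Remez inequality for measurable sets (Nazarov/Turán form: $\sup_I|P|\le (C/|E|)^{N}\sup_E|P|$) gives only the smallest eigenvalue $\ge e^{-CN}$. Multiplying $N$ such bounds would give $e^{-CN^2}$, which is too weak.

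I would therefore bound the eigenvalues individually. By the min–max principle, the $k$-th smallest eigenvalue is at least the Remez constant for trigonometric polynomials of degree $k$. This uses Nazarov's Turán lemma: a subspace of codimension $k$ in $\mathrm{span}\{e_1,\dots,e_N\}$ contains elements concentrated only on low frequencies... If this fails, my fallback is to abandon the averaging argument. Instead I would select $N$ nodes in $E$ with Chebyshev-like spacing in the $\cos$ variable, deleting the set of measure $<m/2$ near the endpoints. A greedy/Fekete choice maximizing $\Phi_N$ over $\bar E^N$ gives $|\ell_i|\le e^{2NMT}$ directly, but it only yields pointwise sampling. The step from pointwise values to $L^2(E)$ norms would then be handled by applying the estimate to a Lebesgue-point selection and averaging over small perturbations of the nodes.
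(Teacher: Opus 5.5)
Your first step is correct and matches the paper's Step~2: Cramer's rule, the cancellation of $e^{-S_Nt}$ via \eqref{eq:determinant-comparison}, and the bound of $\ell_i$ by $e^{2NMT}$ times $\sin(\pi y/L)/\sin(\pi x_i/L)$ times a Lagrange polynomial in $\xi=\cos(\pi x/L)$.

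The gap is in how you pass from node values to $\lVert v\rVert_{L^2(E)}$. Your main route reduces everything to $\det G_E\ge e^{-CN}$, where $G_E=(\langle e_j,e_k\rangle_{L^2(E)})_{j,k\le N}$. This estimate is false. By Andréief, $\det G_E=(N!)^{-1}\int_{E^N}\Phi_N^2$. After the substitution $\xi=\cos(\pi x/L)$, this becomes a Vandermonde integral with weight $\sqrt{1-\xi^2}$ over $\cos(\pi E/L)$. Take $E=(0,L/2)$, so the $\xi$-set is $(0,1)$. Rescaling $(0,1)$ onto $(-1,1)$ costs $2^{-N(N-1)}$ in $\Delta(\xi)^2$, while the change of Jacobi weights costs only $e^{O(N)}$ by Selberg's formula. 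Hence $\det G_E\le 2^{-N^2+O(N)}$. Since $\lambda_{\min}(G_E)\ge e^{-CN}$, this forces a positive proportion of the eigenvalues of $G_E$ to be exponentially small. So no eigenvalue-by-eigenvalue min--max argument can reach your target.

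The loss comes from enlarging $E^{N-1}$ to $I^{N-1}$. Expand along the $y$-row and apply Andréief to the minors. With $e(y)=(e_1(y),\ldots,e_N(y))$, this gives exactly
\[ \int_{E^{N-1}}\Phi_N(x_1,\ldots,x_{i-1},y,x_{i+1},\ldots,x_N)^2\prod_{k\ne i}\mathrm dx_k=(N-1)!\,\det G_E\;e(y)^{\top}G_E^{-1}e(y). \]
Then $\det G_E$ cancels, and your averaging yields $\lVert v\rVert_H^2\le Ne^{4NMT}\operatorname{tr}(G_E^{-1})\lVert v\rVert_{L^2(E)}^2$. This repaired version is a legitimate alternative route: it transfers the estimate from $\mathcal E_N(0)$ to $\mathcal E_N(t)$ at cost $Ne^{2NMT}$. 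It still needs the static bound $\lambda_{\min}(G_E)\ge e^{-CN}$, uniformly in $|E|\ge m$. You only cite that bound, and it is exactly the case $t=0$ of the proposition.

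Your fallback does not close the gap either. Fekete points maximizing $\Phi_N$ over $\bar E^N$ give $|\ell_i^0(y)|\le1$ only for $y\in\bar E$. On $I\setminus\bar E$ the Lagrange functions grow exponentially, and you would need a separate quantitative bound, uniform over $|E|\ge m$, to control $\lVert v\rVert_H$. The step ``averaging over small perturbations of the nodes'' is also left unspecified, and it is delicate when $E$ is nowhere dense.

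The missing idea is the one the paper uses, Chebyshev's inequality:
\begin{itemize}
\item There is $E_0\subset E$ with $|E_0|\ge m/2$ on which $|v|\le(2/m)^{1/2}\lVert v\rVert_{L^2(E)}$. Choose the nodes inside $E_0$; they may depend on $v$.
\item Trim $E_0$ near the endpoints, so that $\sin(\pi x_i/L)\ge m/(4L)$.
\item Place the nodes at quantiles of a compact subset of the image under $x\mapsto\cos(\pi x/L)$, so that $|\cos(\pi x_i/L)-\cos(\pi x_j/L)|\ge c_*|i-j|/N$.
\end{itemize}
The explicit product formula then gives $\sum_i|\ell_i^0|\le e^{C_0N}$ on all of $I$. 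Hence $|v|\le(2/m)^{1/2}e^{(2MT+C_0)N}\lVert v\rVert_{L^2(E)}$ pointwise, with no Gram determinant or Remez theorem required.
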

\begin{proof}
\step{Step 1. We choose separated observation points.}
Set $R=\lVert v\rVert_{L^2(E)}$. By Chebyshev's inequality
(and directly if $R=0$), there is a measurable set $E_0\subset E$
with $|E_0|\geq m/2$ such that
\[
 |v(x)|\leq (2/m)^{1/2}R\qquad (x\in E_0).
\]
After removing neighborhoods of the endpoints of total length $m/4$,
we obtain a set
\[
 E_1\subset E_0\cap[m/8,L-m/8],
 \qquad |E_1|\geq m/4.
\]
On $E_1$,
\[
 \sin(\pi x/L)\geq\frac{m}{4L}.
\]
Let $S=\{\cos(\pi x/L):x\in E_1\}$. A change of variables gives
\[
 |S|=\int_{E_1}\frac{\pi}{L}\sin(\pi x/L)\,\mathrm dx
 \geq\frac{\pi m^2}{16L^2}.
\]
Choose a compact set $S_0\subset S$ with $|S_0|\geq |S|/2$.
The function $\xi\mapsto |S_0\cap(-\infty,\xi]|$ is continuous and
$1$-Lipschitz. We can therefore choose $\xi_1<\cdots<\xi_N$ in $S_0$
such that
\[
 |S_0\cap(-\infty,\xi_i]|=
 \frac{(i-1/2)|S_0|}{N},\qquad 1\leq i\leq N.
\]
It follows that $|\xi_i-\xi_j|\geq |S_0||i-j|/N$.
Taking $x_i\in E_1$ with $\cos(\pi x_i/L)=\xi_i$, we obtain
\[
 |\cos(\pi x_i/L)-\cos(\pi x_j/L)|
 \geq c_*\frac{|i-j|}{N},
 \qquad c_*=\frac{\pi m^2}{32L^2}.
\]

\step{Step 2. We estimate the interpolation functions.}
Let $\ell_i^c(t,\cdot)\in\mathcal E_N(t)$ be the cardinal functions
associated with these nodes, and let $\ell_i^0$ be the corresponding
functions for the first $N$ sine modes. By Cramer's rule and
\eqref{eq:determinant-comparison},
\[
 |\ell_i^c(t,x)|\leq e^{2NMt}|\ell_i^0(x)|.
\]
Indeed, each cardinal function is a quotient of evaluation
determinants. After ordering the evaluation points, the factors
$e^{-S_Nt}$ cancel between numerator and denominator. The bound at
repeated evaluation points and at the endpoints follows by continuity.
The sine-mode cardinal function has the explicit representation
\[
 \ell_i^0(x)
 =
 \frac{\sin(\pi x/L)}{\sin(\pi x_i/L)}
 \prod_{j\ne i}
 \frac{\cos(\pi x/L)-\cos(\pi x_j/L)}
      {\cos(\pi x_i/L)-\cos(\pi x_j/L)}.
\]
The lower bound on $\sin(\pi x_i/L)$ and the separation of the nodes
give
\[
 |\ell_i^0(x)|\leq
 \frac{4L}{m}
 \frac{(2N/c_*)^{N-1}}{(i-1)!(N-i)!}.
\]
Here we used $\prod_{j\ne i}|i-j|=(i-1)!(N-i)!$.
Summing over $i$ yields
\begin{align*}
 \sum_{i=1}^N|\ell_i^0(x)|
 &\leq \frac{4L}{m}\left(\frac{2N}{c_*}\right)^{N-1}
 \sum_{i=1}^N\frac{1}{(i-1)!(N-i)!}\\
 &=\frac{4L}{m}\frac{(4N/c_*)^{N-1}}{(N-1)!}
 \leq e^{C_0N},
\end{align*}
where $C_0$ depends only on $L$ and $m$. For $N\geq2$, the last
step uses $(N-1)!\geq((N-1)/e)^{N-1}$ and $N/(N-1)\leq2$;
the case $N=1$ follows by enlarging $C_0$. Therefore,
\[
 |v(x)|
 \leq
 \sum_{i=1}^N|v(x_i)|\,|\ell_i^c(t,x)|
 \leq
 (2/m)^{1/2}e^{(2MT+C_0)N}R.
\]
Taking the $L^2(I)$ norm proves \eqref{eq:sampling}, after enlarging
the constant $C$.
\end{proof}

We next estimate the component outside the moving space
$\mathcal E_N(t)$. We first record a Dirichlet-quotient estimate for
solutions of \cref{eq:forward}. This is the standard argument used in
backward uniqueness; see \cite{Kukavica2007}.

\begin{lemma}\label{lem:dirichlet-quotient}
Let $v$ solve \cref{eq:forward} with nonzero initial value in $V$.
Then $v(t)\ne0$ for every $t\in[0,T]$, and
\begin{equation}\label{eq:quotient-definition}
 \mathcal Q_v(t)
 =\frac{\lVert A^{1/2}v(t)\rVert_H^2}
        {\lVert v(t)\rVert_H^2}
\end{equation}
satisfies
\begin{equation}\label{eq:quotient-derivative}
 \mathcal Q_v'(t)\leq\frac{M^2}{2}
 \qquad\text{for almost every }t\in(0,T).
\end{equation}
\end{lemma}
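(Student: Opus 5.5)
We follow the classical log-convexity (Agmon–Nirenberg / Kukavica) argument, using the regularity framework \eqref{eq:strong-regularity}--\eqref{eq:regularity-identity} with $g=-cv$. Since $v(0)\in V$, we have $v\in H^1(0,T;H)\cap L^2(0,T;D(A))\cap C([0,T];V)$. Hence both $a(t)=\lVert v(t)\rVert_H^2$ and $b(t)=\lVert A^{1/2}v(t)\rVert_H^2$ are absolutely continuous, with
\[
a'=2\langle v,v'\rangle_H,\qquad b'=2\langle Av,v'\rangle_H
\quad\text{a.e.}
\]

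\textbf{Nonvanishing.} Suppose $v(t_0)=0$ for some $t_0$, and set $t_*=\inf\{t:v(t)=0\}>0$. On $[0,t_*)$ we have $a>0$, so the derivative computation below applies there. It gives $\mathcal Q_v(t)\leq \mathcal Q_v(0)+M^2t/2$ on $[0,t_*)$. Next,
\[
a'=-2b-2\langle cv,v\rangle\geq -2(\mathcal Q_v+M)a,
\]
so $a(t)\geq a(0)\exp\bigl(-2\int_0^t(\mathcal Q_v+M)\bigr)$. This bound stays positive as $t\to t_*$, which contradicts $a(t_*)=0$ by continuity of $a$. Hence $v(t)\neq0$ on $[0,T]$, and $\mathcal Q_v$ is well defined and absolutely continuous.

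\textbf{Derivative bound.} Write $v'=-Av-cv$ and abbreviate $\mathcal Q=\mathcal Q_v$. Then a.e.
\[
\mathcal Q'=\frac{b'a-ba'}{a^2}=\frac{2}{a}\bigl(\langle Av,v'\rangle-\mathcal Q\langle v,v'\rangle\bigr)=\frac{2}{a}\langle (A-\mathcal Q)v,v'\rangle .
\]
Since $\langle (A-\mathcal Q)v,v\rangle=0$, we may replace $v'$ by $v'+\mathcal Q v=-(A-\mathcal Q)v-cv$. This gives
\[
\mathcal Q'=\frac{2}{a}\Bigl(-\lVert (A-\mathcal Q)v\rVert^2-\langle (A-\mathcal Q)v,cv\rangle\Bigr).
\]
By Young's inequality, $|\langle w,cv\rangle|\leq\lVert w\rVert^2+\tfrac14 M^2 a$. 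Therefore
\[
\mathcal Q'\leq \frac{2}{a}\cdot\frac{M^2a}{4}=\frac{M^2}{2}.
\]

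\textbf{Main obstacle.} The main point is justifying the pointwise-in-time computations. This requires $Av\in L^2$ a.e. in $t$, so that $\langle Av,v'\rangle$ makes sense, and the absolute continuity of $b$. Both are supplied by \eqref{eq:regularity-identity}.

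A secondary issue is the order of the argument. The nonvanishing and the quotient bound must be obtained simultaneously on the maximal interval where $a>0$. The bound on $\mathcal Q_v$ then forbids $a$ from reaching $0$ in finite time.
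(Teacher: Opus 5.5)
Your proposal is correct and is essentially the paper's proof. You derive the same identity $\mathcal Q_v'=-2\lVert(A-\mathcal Q_v)v\rVert_H^2/\lVert v\rVert_H^2-2\langle cv,(A-\mathcal Q_v)v\rangle_H/\lVert v\rVert_H^2$ and bound it by $M^2/2$ via Young's inequality, which is the paper's $-2a^2+2Ma\leq M^2/2$; you then exclude a first zero using $(\log\lVert v\rVert_H^2)'\geq-2(\mathcal Q_v+M)$ together with the linear-in-time bound on $\mathcal Q_v$, exactly as the paper does.
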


\begin{proof}
On any interval where $v$ is nonzero,
\eqref{eq:strong-regularity} implies that the numerator and denominator
in \eqref{eq:quotient-definition} are absolutely continuous. Using
$
 \langle Av,v\rangle
 =\mathcal Q_v\lVert v\rVert_H^2,
$
we obtain, almost everywhere,
\begin{align*}
 \mathcal Q_v'
 =-2\frac{\lVert(A-\mathcal Q_v)v\rVert_H^2}
          {\lVert v\rVert_H^2}
   -2\frac{\langle cv,(A-\mathcal Q_v)v\rangle}
          {\lVert v\rVert_H^2}
 \leq\frac{M^2}{2}.
\end{align*}
Indeed, the last inequality follows from
$-2a^2+2Ma\leq M^2/2$ for $a\geq0$.

We also have
\[
 \frac{\mathrm d}{\mathrm dt}\log\lVert v(t)\rVert_H
 \geq-\mathcal Q_v(t)-M.
\]
Hence, for $0\leq s\leq t$ before a possible first zero,
\begin{equation}\label{eq:nonvanishing-estimate}
 \lVert v(t)\rVert_H
 \geq
 \lVert v(s)\rVert_H
 \exp\left[
   -(\mathcal Q_v(s)+M)(t-s)
   -\frac{M^2}{4}(t-s)^2
 \right].
\end{equation}
This estimate rules out a first zero by continuity. Therefore
$v(t)\ne0$ on $[0,T]$, and \eqref{eq:quotient-derivative} holds on the
whole interval.
\end{proof}

We now introduce the orthogonal projection onto the moving space.
Let $\Pi_N(t)$ be the orthogonal projection onto $\mathcal E_N(t)$,
and let $F_N(t):\mathbb R^N\to H$ have columns
$f_1(t),\ldots,f_N(t)$. By \cref{lem:determinant},
$\dim\mathcal E_N(t)=N$ for every $t\in[0,T]$. Thus
\begin{equation}\label{eq:projection-formula}
 \Pi_N(t)
 =F_N(t)G_N(t)^{-1}F_N(t)^*,
 \qquad
 G_N(t)=F_N(t)^*F_N(t).
\end{equation}
For fixed $N$, \eqref{eq:strong-regularity} gives
\[
 F_N\in H^1(0,T;\mathcal L(\mathbb R^N,H)),
 \qquad
 G_N\in H^1(0,T;\mathbb R^{N\times N}).
\]
The matrix $G_N(t)$ is continuous and positive definite on $[0,T]$,
so
\[
 \min_{t\in[0,T]}\lambda_{\min}(G_N(t))>0.
\]
Consequently, $G_N^{-1}\in H^1(0,T;\mathbb R^{N\times N})$, with
$(G_N^{-1})'=-G_N^{-1}G_N'G_N^{-1}$ almost everywhere.
For each fixed $N$ and coefficient $c$, these properties justify
differentiating the decomposition along $\mathcal E_N(t)$ and
$\mathcal E_N(t)^\perp$.

\begin{proposition}\label{prop:dissipation}
For every $N\geq1$ and $t\in[0,T]$,
\begin{equation}\label{eq:complement-coercivity}
 \lVert A^{1/2}h\rVert_H^2
 \geq
 \left(\lambda_{N+1}-\frac{M^2t}{2}\right)
 \lVert h\rVert_H^2,
 \qquad
 h\in V\cap\mathcal E_N(t)^\perp.
\end{equation}
Moreover, for $0\leq s\leq t\leq T$,
\begin{equation}\label{eq:exact-dissipation}
 \lVert(\operatorname{Id}-\Pi_N(t))U_c(t,s)\rVert_{\mathcal L(H)}
 \leq
 \exp\left[
 -(\lambda_{N+1}-M)(t-s)
 +\frac{M^2}{4}(t^2-s^2)
 \right].
\end{equation}
Choose $N_0\geq1$ such that
\[
 \lambda_{N_0+1}\geq 2M+M^2T.
\]
Then, for every $N\geq N_0$ and $0\leq s\leq t\leq T$,
\begin{equation}\label{eq:quadratic-dissipation}
 \lVert(\operatorname{Id}-\Pi_N(t))U_c(t,s)\rVert_{\mathcal L(H)}
 \leq
 e^{-\lambda_{N+1}(t-s)/2}
 \leq
 e^{-dN^2(t-s)},
 \qquad
 d=\frac{\pi^2}{2L^2}.
\end{equation}
\end{proposition}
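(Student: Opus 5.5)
The plan is to prove the coercivity bound \eqref{eq:complement-coercivity} by duality with a backward adjoint problem, combined with \cref{lem:dirichlet-quotient}, and then to obtain \eqref{eq:exact-dissipation} from a one-sided differential inequality for $\lVert(\operatorname{Id}-\Pi_N(t))z(t)\rVert_H$. The key fact throughout is the invariance \eqref{eq:space-invariance}.

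\emph{Coercivity.} Fix $t\in(0,T]$ and a nonzero $h\in V\cap\mathcal E_N(t)^\perp$; the case $t=0$ is the usual Dirichlet bound, since then $\mathcal E_N(0)^\perp=\overline{\operatorname{span}}\{e_j:j>N\}$.

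1. Let $p$ solve the adjoint problem $-p'+Ap+cp=0$ on $(0,t)$ with $p(t)=h$. Then $p(0)=U_c(t,0)^*h$, and for $j\le N$
\[
\langle p(0),e_j\rangle=\langle h,f_j(t)\rangle=0,
\]
so $p(0)$ lies in the closed span of $\{e_j\}_{j>N}$.
2. Reverse time with $\tilde p(r)=p(t-r)$. Then $\tilde p$ solves an equation of the form \cref{eq:forward} with coefficient $c(t-r,\cdot)$, still bounded by $M$, and its initial value $h$ lies in $V$.
3. \cref{lem:dirichlet-quotient} gives $\tilde p(r)\neq0$ for all $r$ (hence $p(0)\ne0$), and $\mathcal Q_{\tilde p}(t)\le \mathcal Q_{\tilde p}(0)+M^2t/2$.
4. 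Since $p(0)\in V$ by parabolic smoothing and is orthogonal to $e_1,\dots,e_N$, we have $\mathcal Q_{\tilde p}(t)\ge\lambda_{N+1}$. Combining with step 3,
\[
\lVert A^{1/2}h\rVert_H^2\ge\Bigl(\lambda_{N+1}-\tfrac{M^2t}{2}\Bigr)\lVert h\rVert_H^2 .
\]

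\emph{Exact dissipation.} Fix $s$ and $z_s\in H$, and put $z(\tau)=U_c(\tau,s)z_s$ and $w(\tau)=(\operatorname{Id}-\Pi_N(\tau))z(\tau)$.

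1. Continuity of $\tau\mapsto\lVert w(\tau)\rVert_H$ follows from \eqref{eq:projection-formula} and the continuity of $F_N$ and $G_N^{-1}$.
2. For $\tau<\tau'$, \eqref{eq:space-invariance} gives $U_c(\tau',\tau)\Pi_N(\tau)z(\tau)\in\mathcal E_N(\tau')$. Since $\Pi_N(\tau')$ is the orthogonal projection, this yields the comparison
\[
\lVert w(\tau')\rVert_H=\operatorname{dist}\bigl(z(\tau'),\mathcal E_N(\tau')\bigr)\le\lVert U_c(\tau',\tau)w(\tau)\rVert_H .
\]
3. For $\tau>s$ we have $w(\tau)\in V$, because $z(\tau)\in V$ and $\mathcal E_N(\tau)\subset V$. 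The regularity \eqref{eq:strong-regularity}, applied from time $\tau$, makes $\tau'\mapsto\lVert U_c(\tau',\tau)w(\tau)\rVert_H^2$ differentiable at $\tau'=\tau^+$ with derivative $-2\lVert A^{1/2}w\rVert^2-2\langle cw,w\rangle$.
4. By the coercivity step this derivative is at most $-2(\lambda_{N+1}-M^2\tau/2-M)\lVert w(\tau)\rVert_H^2$. Hence the upper right Dini derivative of $\log\lVert w\rVert_H$ is at most $-(\lambda_{N+1}-M)+M^2\tau/2$ wherever $w\ne0$.
5. A standard Dini-derivative comparison for continuous functions integrates this from $s$ to $t$, using $\lVert w(s)\rVert_H\le\lVert z_s\rVert_H$. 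If $w$ vanishes at some time, step 2 keeps it zero afterwards. This gives \eqref{eq:exact-dissipation}.

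\emph{Quadratic form.} Since $t+s\le2T$,
\[
(\lambda_{N+1}-M)(t-s)-\tfrac{M^2}{4}(t^2-s^2)\ge\Bigl(\lambda_{N+1}-M-\tfrac{M^2T}{2}\Bigr)(t-s).
\]
For $N\ge N_0$, monotonicity of $\lambda_j$ gives $\lambda_{N+1}\ge\lambda_{N_0+1}\ge 2M+M^2T$, so the right-hand side is at least $\tfrac12\lambda_{N+1}(t-s)$. Finally, $\lambda_{N+1}/2=\pi^2(N+1)^2/(2L^2)\ge dN^2$, which gives \eqref{eq:quadratic-dissipation}.

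\emph{Main obstacle.} The delicate point is the justification in the dissipation step. One must pass from the comparison $\lVert w(\tau')\rVert\le\lVert U_c(\tau',\tau)w(\tau)\rVert$ to an integrated bound, without differentiating $\Pi_N$ directly. The plan is to use the Dini-derivative argument together with the continuity of $\lVert w\rVert_H$ and the smoothing for $\tau>s$. The endpoint $\tau=s$ is then handled by letting $s'\downarrow s$.
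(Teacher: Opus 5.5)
Your proposal is correct. The coercivity part is the paper's argument: backward adjoint, time reversal, then \cref{lem:dirichlet-quotient}. The passage from \eqref{eq:exact-dissipation} to \eqref{eq:quadratic-dissipation} is also identical.

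The decay estimate follows a genuinely different route. The paper takes $z_s\in V$ and writes $w=z-F_Na$ with $a=G_N^{-1}F_N^*z$. It uses $F_N,G_N^{-1}\in H^1$ to differentiate this decomposition. Because the extra term $F_Na'$ lies in $\mathcal E_N(r)$, which is orthogonal to $w(r)$, it gets the exact almost-everywhere identity \eqref{eq:distance-energy}. A Gronwall integration of an absolutely continuous function and a density argument from $V$ to $H$ then finish.

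You never differentiate $\Pi_N$. Instead, invariance \eqref{eq:space-invariance} gives the one-step comparison $\lVert w(\tau')\rVert_H\le\lVert U_c(\tau',\tau)w(\tau)\rVert_H$. A local energy computation for the unprojected evolution started at $w(\tau)\in V$ then bounds the upper right Dini derivative of $\log\lVert w\rVert_H$.

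What each route buys:
\begin{itemize}
\item Your route needs only continuity of $\Pi_N$ and handles $z_s\in H$ directly through smoothing for $\tau>s$. It makes invariance the only structural input.
\item Its price is the Dini-derivative monotonicity lemma. That lemma requires the bound at every point of $(s,t)$, not almost everywhere, and your argument does supply this.
\item The paper's route uses only a standard a.e.\ differential identity, but needs the $H^1$ regularity bookkeeping for $F_N$ and $G_N^{-1}$.
\end{itemize}

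One small correction to your step~3. Since $c$ is only bounded and measurable in time, $\langle c(\tau)w(\tau),w(\tau)\rangle_H$ has no pointwise meaning, and the right derivative you assert need not exist. What does hold, using $U_c(\cdot,\tau)w(\tau)\in C([\tau,T];V)$ and $|\langle cv,v\rangle_H|\le M\lVert v\rVert_H^2$, is
\[
\limsup_{\tau'\downarrow\tau}\frac{\lVert U_c(\tau',\tau)w(\tau)\rVert_H^2-\lVert w(\tau)\rVert_H^2}{\tau'-\tau}\le-2\lVert A^{1/2}w(\tau)\rVert_H^2+2M\lVert w(\tau)\rVert_H^2 .
\]
This upper bound is all your step~4 uses, so the argument goes through unchanged.
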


\begin{proof}
\step{Step 1. Coercivity on the moving complement.}
Fix $t>0$ and
$0\ne h\in V\cap\mathcal E_N(t)^\perp$. Let $v$ solve
\[
 v'(r)+Av(r)+c(t-r,\cdot)v(r)=0,
 \qquad v(0)=h,\qquad 0<r<t.
\]
For $0\leq s\leq t$, duality between the forward equation and this
time-reversed equation gives
\[
 v(t-s)=U_c(t,s)^*h.
\]
Indeed, for $f\in H$, the map
$
 r\longmapsto
 \langle U_c(r,s)f,v(t-r)\rangle_H
$
is constant on $[s,t]$. Taking $s=0$ and using
$f_j(t)=U_c(t,0)e_j$, we obtain
\[
 \langle v(t),e_j\rangle_H
 =\langle h,f_j(t)\rangle_H=0,
 \qquad 1\leq j\leq N.
\]
Hence
$
 v(t)\perp\operatorname{span}\{e_1,\ldots,e_N\},
$
and therefore
$
 \mathcal Q_v(t)\geq\lambda_{N+1}.
$

The coefficient $c(t-r,\cdot)$ is still bounded by $M$, so
\cref{lem:dirichlet-quotient} applies to $v$. In particular, $v$ does
not vanish, and integrating \eqref{eq:quotient-derivative} over
$(0,t)$ gives
\[
 \frac{\lVert A^{1/2}h\rVert_H^2}{\lVert h\rVert_H^2}
 =\mathcal Q_v(0)
 \geq\mathcal Q_v(t)-\frac{M^2t}{2}
 \geq\lambda_{N+1}-\frac{M^2t}{2}.
\]
This proves \eqref{eq:complement-coercivity} for $t>0$. At $t=0$,
$\mathcal E_N(0)=\operatorname{span}\{e_1,\ldots,e_N\}$, and the
estimate follows from the usual spectral inequality.

\step{Step 2. Decay on the moving complement.}
Let $z_s\in V$ and set
\[
 z(r)=U_c(r,s)z_s,\qquad s\leq r\leq t.
\]
Using \eqref{eq:projection-formula}, write
\[
 a(r)=G_N(r)^{-1}F_N(r)^*z(r),\qquad
 w(r)=z(r)-F_N(r)a(r)
     =(\operatorname{Id}-\Pi_N(r))z(r).
\]
Thus $w(r)\perp\mathcal E_N(r)$ for every $r\in[s,t]$.

The regularity established above gives
$a\in H^1(s,t;\mathbb R^N)$ and allows us to differentiate this
decomposition. Since both $z$ and the columns of $F_N$ solve
\cref{eq:forward}, we have
\[
 z'=-(A+c)z,\qquad F_N'=-(A+c)F_N.
\]
Hence, almost everywhere on $(s,t)$,
\[
 w'=-(A+c)w-F_Na'.
\]
Now $F_N(r)a'(r)\in\mathcal E_N(r)$, while
$w(r)\in\mathcal E_N(r)^\perp$. Taking the inner product with $w(r)$
therefore gives
\begin{equation}\label{eq:distance-energy}
 \frac12\frac{\mathrm d}{\mathrm dr}\lVert w(r)\rVert_H^2
 =
 -\lVert A^{1/2}w(r)\rVert_H^2
 -\langle c(r)w(r),w(r)\rangle_H.
\end{equation}
By \eqref{eq:complement-coercivity} and
$\lVert c\rVert_\infty\leq M$,
\[
 \frac12\frac{\mathrm d}{\mathrm dr}\lVert w(r)\rVert_H^2
 \leq
 -\left(\lambda_{N+1}-M-\frac{M^2r}{2}\right)
 \lVert w(r)\rVert_H^2.
\]
Integrating from $s$ to $t$ yields
\[
 \lVert w(t)\rVert_H
 \leq
 \exp\left[
 -(\lambda_{N+1}-M)(t-s)
 +\frac{M^2}{4}(t^2-s^2)
 \right]
 \lVert w(s)\rVert_H.
\]
Since
\[
 \lVert w(s)\rVert_H
 =\lVert(\operatorname{Id}-\Pi_N(s))z_s\rVert_H
 \leq\lVert z_s\rVert_H,
\]
we obtain \eqref{eq:exact-dissipation} for $z_s\in V$.
The estimate extends to every $z_s\in H$ by density and continuity of
$U_c(t,s)$.

Finally,
\[
 t^2-s^2=(t+s)(t-s)\leq2T(t-s).
\]
If $N\geq N_0$, then
$\lambda_{N+1}\geq2M+M^2T$, and hence
\[
 \lambda_{N+1}-M-\frac{M^2T}{2}
 \geq\frac{\lambda_{N+1}}{2}.
\]
Therefore
\[
 \lVert(\operatorname{Id}-\Pi_N(t))U_c(t,s)\rVert_{\mathcal L(H)}
 \leq e^{-\lambda_{N+1}(t-s)/2}
 \leq e^{-dN^2(t-s)},
 \qquad d=\frac{\pi^2}{2L^2}.
\]
This proves \eqref{eq:quadratic-dissipation}.
\end{proof}
Compared with \eqref{eq:fixed-complement}, the moving decomposition
removes the coupling term from the energy estimate: the additional term
$F_Na'$ lies in $\mathcal E_N(r)$ and is orthogonal to $w(r)$.
This gives the closed energy estimate \eqref{eq:distance-energy} and,
in turn, the quadratic decay \eqref{eq:quadratic-dissipation}.

\section{Observability and null controllability}\label{sec:observability}

This section establishes observability and null controllability for
measurable observation sets, together with the short-time estimate under
the uniform slice condition.

\begin{lemma}\label{lem:interpolation}
Fix $m\in(0,L]$. There exist $A_0\geq1$ and $B_0>0$, depending only
on $L,M,T,m$, such that every solution of \cref{eq:forward} satisfies
\begin{equation}\label{eq:interpolation}
 \lVert z(t)\rVert_H
 \leq
 A_0e^{B_0/(t-s)}\varepsilon^{-1}
 \lVert z(t)\rVert_{L^2(E)}
 +\varepsilon\lVert z(s)\rVert_H
\end{equation}
for $0\leq s<t\leq T$, $0<\varepsilon<1$, and every measurable
$E\subset I$ with $|E|\geq m$.
If $T\leq T_0$, the constants may be chosen depending only on
$L,M,T_0,m$.
\end{lemma}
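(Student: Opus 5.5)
The plan is to split $z(t)$ along the moving space and its orthogonal complement, and then balance the two estimates of \cref{sec:subspaces} by choosing $N$ depending on $\varepsilon$ and $t-s$. Write $z(t)=U_c(t,s)z(s)$ and decompose
\[
 z(t)=\Pi_N(t)z(t)+(\operatorname{Id}-\Pi_N(t))z(t).
\]
By \cref{prop:dissipation}, for $N\geq N_0$ the second piece satisfies
\[
\lVert(\operatorname{Id}-\Pi_N(t))z(t)\rVert_H\leq e^{-dN^2(t-s)}\lVert z(s)\rVert_H=:\delta_N\lVert z(s)\rVert_H .
\]
For the first piece, $\Pi_N(t)z(t)\in\mathcal E_N(t)$, so \cref{prop:sampling} gives
\[
\lVert \Pi_N(t)z(t)\rVert_H\leq e^{CN}\lVert \Pi_N(t)z(t)\rVert_{L^2(E)}\leq e^{CN}\bigl(\lVert z(t)\rVert_{L^2(E)}+\delta_N\lVert z(s)\rVert_H\bigr).
\]
Adding the two pieces yields
\[
\lVert z(t)\rVert_H\leq e^{CN}\lVert z(t)\rVert_{L^2(E)}+2e^{CN-dN^2(t-s)}\lVert z(s)\rVert_H ,\qquad N\geq N_0 .
\]

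Next comes the choice of $N$. With $\tau=t-s$, I need $2e^{CN-dN^2\tau}\leq\varepsilon$. Once $N\geq 2C/(d\tau)$ we have $CN-dN^2\tau\leq -dN^2\tau/2$, so it suffices to also have $dN^2\tau/2\geq\log(2/\varepsilon)$. I therefore take $N$ to be the smallest integer with
\[
N\geq\max\Bigl\{N_0,\ \tfrac{2C}{d\tau},\ \bigl(2\log(2/\varepsilon)/(d\tau)\bigr)^{1/2}\Bigr\}.
\]
The observation cost is then $e^{CN}$, and I bound $CN$ by treating each term of the maximum separately:
\begin{itemize}
\item the first term contributes $C(N_0+1)$, which is a constant;
\item the second term contributes $2C^2/(d\tau)$, of order $1/\tau$;
\item the third term contributes $C\sqrt{2\log(2/\varepsilon)/(d\tau)}$.
\end{itemize}
For the third term, Young's inequality $\sqrt{ab}\leq a+b$ gives
\[
C\sqrt{2\log(2/\varepsilon)/(d\tau)}\leq \log(2/\varepsilon)+\frac{C^2}{2d\tau}.
\]
Consequently $e^{CN}\leq A_0e^{B_0/\tau}\cdot(2/\varepsilon)$, and after absorbing the factor $2$ into $A_0$ this is exactly \eqref{eq:interpolation}. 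Here $A_0$ depends on $N_0$ and $C$, and $B_0=(2C^2+C^2/2)/d$.

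The uniformity in $T\leq T_0$ should follow from how the constants arise. The constant $C$ in \cref{prop:sampling} comes from $2MT+C_0$, which is monotone in $T$, so it can be replaced by its value at $T_0$. The integer $N_0$ is defined by $\lambda_{N_0+1}\geq 2M+M^2T$, so the choice made for $T_0$ also works for every $T\leq T_0$.

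The main obstacle is this balancing step. The sampling cost $e^{CN}$ must be beaten by the quadratic decay $e^{-dN^2\tau}$, and the $\varepsilon^{-1}$ factor has to come out with exponent exactly one rather than $\varepsilon^{-1-\eta}$. The square-root/Young bound is what makes this work: the loss from the third term is purely additive in $\log(1/\varepsilon)$ and in $1/\tau$. Everything else is a direct combination of \cref{prop:sampling} and \cref{prop:dissipation}.
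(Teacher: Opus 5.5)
Your proposal is correct and follows the paper's proof. It uses the same splitting $z(t)=\Pi_N(t)z(t)+(\operatorname{Id}-\Pi_N(t))z(t)$, which leads to $\lVert z(t)\rVert_H\leq e^{CN}\lVert z(t)\rVert_{L^2(E)}+2e^{CN-dN^2\tau}\lVert z(s)\rVert_H$. It then takes $N$ as the ceiling of $\max\{N_0,2C/(d\tau),\sqrt{2\log(2/\varepsilon)/(d\tau)}\}$ and arrives at the same constants $A_0=2e^{C(N_0+1)}$ and $B_0=5C^2/(2d)$. One small labeling point: the bound you actually use is the AM--GM form $\sqrt{ab}\leq(a+b)/2$, not $\sqrt{ab}\leq a+b$, but your displayed inequality is correct.
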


\begin{proof}
Let $C$ be the constant in \cref{prop:sampling}, and let $N_0$ and
$d$ be given by \cref{prop:dissipation}. For $N\geq N_0$, decompose
\[
 z(t)=\Pi_N(t)z(t)+(\operatorname{Id}-\Pi_N(t))z(t).
\]
By \cref{prop:sampling},
\begin{align*}
 \lVert z(t)\rVert_H
 &\leq
 e^{CN}\lVert\Pi_N(t)z(t)\rVert_{L^2(E)}
 +\lVert(\operatorname{Id}-\Pi_N(t))z(t)\rVert_H\\
 &\leq
 e^{CN}\lVert z(t)\rVert_{L^2(E)}
 +(1+e^{CN})
 \lVert(\operatorname{Id}-\Pi_N(t))z(t)\rVert_H.
\end{align*}
Using \cref{prop:dissipation} and $e^{CN}\geq1$, we obtain
\begin{equation}\label{eq:frequency-splitting}
 \lVert z(t)\rVert_H
 \leq
 e^{CN}\lVert z(t)\rVert_{L^2(E)}
 +2e^{CN-dN^2(t-s)}\lVert z(s)\rVert_H.
\end{equation}

Set $\delta=t-s$ and $\ell=\log(2/\varepsilon)$, and choose
$
 N=
 \left\lceil
 \max\left\{
 N_0,\frac{2C}{d\delta},
 \sqrt{\frac{2\ell}{d\delta}}
 \right\}
 \right\rceil.
$
Then
\[
 CN-d\delta N^2
 \leq-\frac{d\delta N^2}{2}
 \leq-\ell,
\]
so the second term in \eqref{eq:frequency-splitting} is bounded by
$\varepsilon\lVert z(s)\rVert_H$.

Moreover,
\begin{align*}
 CN
 \leq
 C(N_0+1)+\frac{2C^2}{d\delta}
 +C\sqrt{\frac{2\ell}{d\delta}}
 \leq
 C(N_0+1)+\frac{5C^2}{2d\delta}+\ell.
\end{align*}
Since $e^\ell=2/\varepsilon$, \eqref{eq:interpolation} follows with
\[
 A_0=2e^{C(N_0+1)},
 \qquad
 B_0=\frac{5C^2}{2d}.
\]
For $T\leq T_0$, the constants in
\cref{prop:sampling,prop:dissipation} may be chosen in terms of
$L,M,T_0,m$, which gives the final statement.
\end{proof}

We now turn to observability on measurable spacetime sets.
The resulting estimate is uniform for all coefficients satisfying
$\lVert c\rVert_\infty\leq M$.

\begin{proposition}\label{prop:forward-observability}
Let $\mathcal D\subset(0,T)\times I$ be measurable with
$|\mathcal D|>0$. There exists a constant
$C_*=C_*(L,T,M,\mathcal D)>0$ such that every solution of
\cref{eq:forward} satisfies
\begin{equation}\label{eq:mixed-observability}
 \lVert z(T)\rVert_H
 \leq
 C_*\int_0^T
 \lVert z(t)\rVert_{L^2(\mathcal D_t)}\,\mathrm dt.
\end{equation}
Consequently,
\begin{equation}\label{eq:forward-L2}
 \lVert z(T)\rVert_H^2
 \leq
 TC_*^2
 \int_{\mathcal D}|z(t,x)|^2\,\mathrm dt\,\mathrm dx.
\end{equation}

Moreover, fix $T_0>0$ and $m\in(0,L]$. There exist
$C_2,C_3>0$, depending only on $L,T_0,M,m$, such that, whenever
$0<T\leq T_0$ and
$|\mathcal D_t|\geq m$ for almost every $t\in(0,T)$,
every solution of \cref{eq:forward} satisfies
\begin{equation}\label{eq:forward-short-time}
 \lVert z(T)\rVert_H
 \leq
 C_2e^{C_3/T}
 \int_0^T
 \lVert z(t)\rVert_{L^2(\mathcal D_t)}\,\mathrm dt.
\end{equation}
\end{proposition}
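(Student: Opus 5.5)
Our plan is to run the telescoping-series argument of \cite{PW,AEWZ}, with \cref{lem:interpolation} as the one-step estimate. By Fubini, $t\mapsto|\mathcal D_t|$ is measurable and $\int_0^T|\mathcal D_t|\,\mathrm dt=|\mathcal D|>0$. Hence there are $m>0$ and a measurable set $E\subset(0,T)$ with $|E|>0$ such that $|\mathcal D_t|\ge m$ for all $t\in E$. Take a Lebesgue density point $\ell\in(0,T)$ of $E$. Following \cite{PW}, the density-point lemma gives a decreasing sequence $\ell_1>\ell_2>\cdots\to\ell$ with $\ell_1\le T$, $\ell_k-\ell_{k+1}=q(\ell_{k+1}-\ell_{k+2})$ for some fixed $q>1$, and $|E\cap(\ell_{k+1},\ell_k)|\ge(\ell_k-\ell_{k+1})/3$. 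Set $\delta_k=\ell_k-\ell_{k+1}$. All constants below are uniform in $c$ with $\|c\|_\infty\le M$, because \cref{lem:interpolation} is.

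\emph{One-step estimate.} Let $\tau=\ell_{k+1}+\delta_k/6$ and $t\in E\cap(\tau,\ell_k)$, which has measure at least $\delta_k/6$. Apply \cref{lem:interpolation} with $s=\ell_{k+1}$ and $\mathcal D_t$ as the observation set; then $t-s\ge\delta_k/6$. Combine this with $\|z(\ell_k)\|_H\le e^{MT}\|z(t)\|_H$ from \eqref{eq:energy-growth}, and integrate over those $t$. This gives
\[
\|z(\ell_k)\|_H\le \frac{A}{\delta_k}e^{B/\delta_k}\varepsilon^{-1}\int_{\ell_{k+1}}^{\ell_k}\|z(t)\|_{L^2(\mathcal D_t)}\,\mathrm dt+e^{MT}\varepsilon\|z(\ell_{k+1})\|_H .
\]
Absorbing $1/\delta_k$ into the exponential, and rescaling $\varepsilon$ by $e^{MT}$ with a suitable change of constants, we get the normalized form
\[
\varepsilon\|z(\ell_k)\|_H-\varepsilon^2\|z(\ell_{k+1})\|_H\le Ae^{B'/\delta_k}\int_{\ell_{k+1}}^{\ell_k}\|z(t)\|_{L^2(\mathcal D_t)}\,\mathrm dt .
\]

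\emph{Telescoping.} Choose $\varepsilon=\varepsilon_k=e^{-\gamma/\delta_k}$ with $\gamma$ large. We need $\varepsilon_k^2\le\varepsilon_{k+1}$, that is $2/\delta_k\ge1/\delta_{k+1}=q/\delta_k$. This holds if $q<2$, which we may arrange when choosing the sequence; otherwise we use powers as in \cite{PW}. Multiply the one-step estimate by $e^{-(B'+\gamma)/\delta_k}$ and note $e^{-(B'+\gamma)/\delta_k}\le1$. Summing over $k\ge1$, the terms $\varepsilon_k\|z(\ell_k)\|_H$ telescope. The tail $\varepsilon_{k}\|z(\ell_{k})\|_H$ tends to zero because $\|z(\ell_k)\|_H\le e^{MT}\|z(0)\|_H$ and $\varepsilon_k\to0$. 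This yields
\[
\|z(\ell_1)\|_H\le C\int_\ell^{T}\|z(t)\|_{L^2(\mathcal D_t)}\,\mathrm dt .
\]
Finally, $\|z(T)\|_H\le e^{MT}\|z(\ell_1)\|_H$ gives \eqref{eq:mixed-observability}, and Cauchy--Schwarz in $t$ gives \eqref{eq:forward-L2}. The main obstacle is this bookkeeping. The exponents $B'/\delta_k$ and $\gamma/\delta_k$ must be balanced so that the required inequality between consecutive $\varepsilon_k$ holds and the weights stay summable. We rely on the precise geometric structure of the sequence $(\ell_k)$ from \cite{PW}, which we take as known.

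\emph{Short-time estimate \eqref{eq:forward-short-time}.} Here every $t$ qualifies, so no density point is needed. Take $\ell=0$, $\ell_k=T2^{1-k}$ and $E=(0,T)$, and use the constants from \cref{lem:interpolation} that depend only on $L,M,T_0,m$. With $\delta_k=T2^{-k}$ and the same choices, the sum's leading constant is controlled by the first step. Hence $C\le C_2e^{C_3/T}$, with $C_2,C_3$ depending only on $L,T_0,M,m$. This uses $q=2$, where $\varepsilon_k^2=\varepsilon_{k+1}$ holds exactly.
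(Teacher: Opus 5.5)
Your route is the paper's: Fubini to get a time set with $|\mathcal D_t|\geq m$, a density point, the one-step bound from \cref{lem:interpolation} and \eqref{eq:energy-growth} averaged over a good part of each interval, then telescoping. The gap is in the telescoping bookkeeping, and in the short-time part the step fails as written.

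The criterion $\varepsilon_k^2\leq\varepsilon_{k+1}$ ignores that the factor which normalizes the observation term also multiplies both norms. With your multiplier $e^{-(B'+\gamma)/\delta_k}$ and $\varepsilon_k=e^{-\gamma/\delta_k}$, step $k$ becomes
\[
 e^{-(B'+2\gamma)/\delta_k}\lVert z(\ell_k)\rVert_H
 -e^{-(B'+3\gamma)/\delta_k}\lVert z(\ell_{k+1})\rVert_H
 \leq A\int_{\ell_{k+1}}^{\ell_k}\lVert z(t)\rVert_{L^2(\mathcal D_t)}\,\mathrm dt .
\]
This telescopes only if $e^{-(B'+3\gamma)/\delta_k}\leq e^{-(B'+2\gamma)/\delta_{k+1}}$, that is, $(3-2q)\gamma\geq(q-1)B'$. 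With the multiplier $e^{-B'/\delta_k}$ instead, the condition is $(2-q)\gamma\geq(q-1)B'$. So $q$ must lie strictly below a threshold, and $\gamma$ must be large compared with $B'$. The condition ``$q<2$'' is not sufficient for the multiplier you chose. In the general case this is repairable, since the Phung--Wang lemma allows any $q>1$.

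In the short-time part, however, you fix $q=2$ and argue that the exact identity $\varepsilon_k^2=\varepsilon_{k+1}$ is enough. For $q=2$ the two conditions above read $-\gamma\geq B'$ and $0\geq B'$, and both are false because $B'>0$. No other choice of weights rescues this. Suppose $w_k$ is the weight on $\lVert z(\ell_k)\rVert_H$, and the weighted observation coefficients are bounded by some $C$. Then telescoping forces $w_{k+1}\geq C^{-1}w_k^2e^{B'/\delta_k}$. With $\delta_k=T2^{-k}$ this gives $w_k\to\infty$, so the tail term $w_{n+1}\lVert z(\ell_{n+1})\rVert_H$ cannot be discarded.

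The fix is to use a ratio below the threshold. For example, take $\ell_k=T(3/4)^{k-1}$, $\delta_k=\tfrac T4(3/4)^{k-1}$ and $\gamma=B'$. Then the condition holds with equality, and you obtain $\lVert z(T)\rVert_H\leq Ae^{3B'/\delta_1}\int_0^T\lVert z(t)\rVert_{L^2(\mathcal D_t)}\,\mathrm dt$ with $\delta_1=T/4$. This has the form $C_2e^{C_3/T}$, because $A$ and $B'$ depend only on $L,T_0,M,m$.

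This is exactly the paper's choice: $r_{j+1}=3r_j/4$, $\eta=e^{-K/r_j}$ and weights $e^{-3K/r_j}$. These give $e^{-3K/r_j}\eta=e^{-3K/r_{j+1}}$, with the observation coefficient $e^{-K/r_j}\leq1$.
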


\begin{proof}
\step{Step 1. We choose the observation times.}
By Fubini's theorem, there exists $m\in(0,L]$ such that
$
 F=\{t\in(0,T):|\mathcal D_t|\geq m\}
$
has positive measure. Choose a Lebesgue density point
$\ell\in F\cap(0,T)$. There exists
$r_0\in(0,\min\{1,T-\ell\})$ such that
\begin{equation}\label{eq:density-property}
 |(\ell,\ell+r)\setminus F|
 \leq \frac r{16},
 \qquad 0<r\leq r_0.
\end{equation}
For $j\geq0$, set
\begin{equation}\label{eq:telescoping-times}
 r_j=r_0(3/4)^j,\qquad
 t_j=\ell+r_j,\qquad
 J_j=(\ell+7r_j/8,\ell+r_j).
\end{equation}
The intervals $J_j$ are pairwise disjoint, and
\begin{equation}\label{eq:observation-time-mass}
 |F\cap J_j|\geq\frac{r_j}{16},
 \qquad
 t-t_{j+1}\geq\frac{r_j}{8}
 \quad (t\in J_j).
\end{equation}
Set
\[
 H_j=\lVert z(t_j)\rVert_H,
 \qquad
 g(t)=\lVert z(t)\rVert_{L^2(\mathcal D_t)}.
\]

\step{Step 2. We derive a discrete estimate.}
Let $A_0,B_0$ be given by \cref{lem:interpolation}. For almost every
$t\in F\cap J_j$, we apply \eqref{eq:interpolation} with
$s=t_{j+1}$ and $E=\mathcal D_t$. Using
\eqref{eq:energy-growth} from $t$ to $t_j$, and then choosing
$\varepsilon=\eta e^{-MT}$ with $0<\eta<1$, gives
\[
 H_j
 \leq
 \eta H_{j+1}
 +A_0e^{2MT+8B_0/r_j}\eta^{-1}g(t).
\]
Averaging over $F\cap J_j$ and using
\eqref{eq:observation-time-mass}, we obtain
\begin{equation}\label{eq:discrete-recursion}
 H_j
 \leq
 \eta H_{j+1}
 +e^{K/r_j}\eta^{-1}
 \int_{F\cap J_j}g(t)\,\mathrm dt,
\end{equation}
Here we may take
\[
 K=8B_0+\log(16A_0)+2MT+1,
\]
which is independent of $j$. Indeed, $0<r_j<1$ and
$\log(1/r_j)\leq 1/r_j$ give
\[
 \frac{16A_0}{r_j}e^{2MT+8B_0/r_j}\leq e^{K/r_j}.
\]

\step{Step 3. We telescope the discrete estimates.}
Choose
$
 \eta=e^{-K/r_j}
$
in \eqref{eq:discrete-recursion} and multiply the result by
$e^{-3K/r_j}$. Since $r_{j+1}=3r_j/4$,
\[
 e^{-3K/r_j}\eta=e^{-3K/r_{j+1}}.
\]
The coefficient of the observation integral becomes
$e^{-K/r_j}\leq1$. Hence
\[
 e^{-3K/r_j}H_j
 -e^{-3K/r_{j+1}}H_{j+1}
 \leq
 \int_{F\cap J_j}g(t)\,\mathrm dt.
\]
Summing from $j=0$ to $n$ and using the disjointness of the $J_j$
gives
\[
 e^{-3K/r_0}H_0
 -e^{-3K/r_{n+1}}H_{n+1}
 \leq
 \int_F g(t)\,\mathrm dt.
\]
By \eqref{eq:energy-growth},
\[
 H_j\leq e^{MT}\lVert z(0)\rVert_H,
\]
while $r_j\to0$. Thus the second term on the left tends to zero.
Applying \eqref{eq:energy-growth} once more from $t_0$ to $T$, we get
\begin{equation}\label{eq:explicit-mixed-constant}
 \lVert z(T)\rVert_H
 \leq
 e^{MT+3K/r_0}
 \int_F g(t)\,\mathrm dt.
\end{equation}
Since $F\subset(0,T)$, this proves \eqref{eq:mixed-observability}.
Cauchy--Schwarz and Fubini's theorem then give
\eqref{eq:forward-L2}. The parameters $m,\ell,r_0$ depend only on
$\mathcal D$, so the estimate is uniform for all
$\lVert c\rVert_\infty\leq M$.

\step{Step 4. We obtain the short-time estimate.}
Assume now that
\[
 |\mathcal D_t|\geq m
 \qquad\text{for almost every }t\in(0,T).
\]
Then $F=(0,T)$ up to a null set. We may take
\[
 \ell=\frac T4,
 \qquad
 r_0=\min\left\{\frac T2,\frac12\right\}.
\]
For $0<T\leq T_0$, choose $A_0,B_0$ in
\cref{lem:interpolation} uniformly in terms of $L,M,T_0,m$.
The constant $K$ in \eqref{eq:discrete-recursion} may then also be
chosen depending only on these parameters.

Since
$
 \frac1{r_0}\leq2+\frac2T,
$
\eqref{eq:explicit-mixed-constant} yields
\[
 \lVert z(T)\rVert_H
 \leq
 C_2e^{C_3/T}
 \int_0^T
 \lVert z(t)\rVert_{L^2(\mathcal D_t)}\,\mathrm dt,
\]
where $C_2,C_3>0$ depend only on $L,M,T_0,m$. This proves
\eqref{eq:forward-short-time}.
\end{proof}

We now return to the adjoint problem and complete the proof of the main
observability and null controllability results.

\begin{proof}[Proof of \cref{thm:main,thm:short-time}]
\step{Step 1. We prove the adjoint observation inequalities.}
Let $p$ solve \cref{eq:adjoint} and define
\begin{equation}\label{eq:time-reversal}
 z(s,x)=p(T-s,x),\qquad
 c(s,x)=b(T-s,x),\qquad
 \mathcal D=\{(s,x):(T-s,x)\in D\}.
\end{equation}
Then $z$ solves \eqref{eq:forward} with $z(0)=q$,
$\lVert c\rVert_\infty=\lVert b\rVert_\infty$, and
$|\mathcal D|=|D|$. Moreover,
\[
 \lVert z(T)\rVert_H=\lVert p(0)\rVert_H,
 \qquad
 \int_{\mathcal D}|z(s,x)|^2\,\mathrm ds\,\mathrm dx
 =
 \int_D|p(t,x)|^2\,\mathrm dt\,\mathrm dx.
\]
Thus \eqref{eq:forward-L2} gives \eqref{eq:observability} with
$C=TC_*^2$.

If $|D_t|\geq m$ for almost every $t$, then
$|\mathcal D_s|=|D_{T-s}|\geq m$ for almost every $s$. Hence
\eqref{eq:forward-short-time} and Cauchy--Schwarz give
\begin{align*}
 \lVert p(0)\rVert_H^2
 =\lVert z(T)\rVert_H^2
 \leq
 C_2^2e^{2C_3/T}
 \left(
   \int_0^T\lVert z(s)\rVert_{L^2(\mathcal D_s)}\,\mathrm ds
 \right)^2
 \leq
 TC_2^2e^{2C_3/T}
 \int_{\mathcal D}|z(s,x)|^2\,\mathrm ds\,\mathrm dx.
\end{align*}
Since $T\leq T_0$, this proves \eqref{eq:short-time} with
 $C_0=T_0C_2^2$ and $C_1=2C_3$.

\step{Step 2. We construct the null control.}
Let $p_q$ denote the solution of \cref{eq:adjoint} with terminal value
$q\in H$. On the subspace
$
 \mathcal X=\{p_q|_D:q\in H\}\subset L^2(D),
$
define
\[
 \Lambda(p_q|_D)
 =-\langle y_0,p_q(0)\rangle_H.
\]
If $p_q|_D=p_{\widetilde q}|_D$, then
\eqref{eq:observability} applied to $p_q-p_{\widetilde q}$ gives
$p_q(0)=p_{\widetilde q}(0)$. Hence $\Lambda$ is well defined.
Moreover,
\[
 |\Lambda(p_q|_D)|
 \leq
 \lVert y_0\rVert_H\lVert p_q(0)\rVert_H
 \leq
 C^{1/2}\lVert y_0\rVert_H
 \lVert p_q\rVert_{L^2(D)}.
\]
The Hahn--Banach theorem extends $\Lambda$ to a bounded linear
functional on $L^2(D)$ with the same norm. By the Riesz representation
theorem, there exists $u\in L^2(D)$ such that
\[
 \lVert u\rVert_{L^2(D)}
 \leq C^{1/2}\lVert y_0\rVert_H
\]
and
\[
 \int_Du p_q\,\mathrm dt\,\mathrm dx
 =-\langle y_0,p_q(0)\rangle_H,
 \qquad q\in H.
\]
The duality identity for the controlled and adjoint equations gives
\begin{equation}\label{eq:control-duality}
 \langle y(T),q\rangle_H
 =
 \langle y_0,p_q(0)\rangle_H
 +\int_Du p_q\,\mathrm dt\,\mathrm dx
 =0
 \qquad(q\in H).
\end{equation}
Therefore $y(T)=0$, and \eqref{eq:control-bound} follows.

\step{Step 3. We prove necessity of positive measure.}
If $|D|=0$, then $\mathbf 1_Du=0$ almost everywhere for every
$u\in L^2(D)$, so the controlled equation reduces to the uncontrolled
one. Take $y_0=e_1$. Since $e_1\in V$ is nonzero,
\cref{lem:dirichlet-quotient} applied with $c=b$ shows that the
corresponding solution satisfies
\[
 y(T)\ne0.
\]
Thus this initial state cannot be driven to zero. Hence null
controllability requires $|D|>0$.
\end{proof}

The mixed-norm observability estimate also yields controls that are
bounded in time.

\begin{corollary}\label{cor:bounded-control}
Let $D\subset(0,T)\times I$ be measurable with $|D|>0$, and let
$M\geq0$. There exists a constant $C_\infty=C_\infty(L,T,M,D)>0$
such that, for every real-valued $b\in L^\infty((0,T)\times I)$ with
$\lVert b\rVert_\infty\leq M$ and every $y_0\in H$,
\cref{eq:controlled} admits a null control, extended by zero outside
$D$, satisfying
\begin{equation}\label{eq:bounded-control}
 u\in L^\infty(0,T;H),
 \qquad
 \lVert u\rVert_{L^\infty(0,T;H)}
 \leq C_\infty\lVert y_0\rVert_H.
\end{equation}
Under the assumptions of \cref{thm:short-time}, one may choose
$
 C_\infty\leq C_2e^{C_3/T},
$
where $C_2,C_3>0$ depend only on $L,T_0,M,m$.
\end{corollary}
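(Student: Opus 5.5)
The plan is to dualize the mixed-norm estimate \eqref{eq:mixed-observability} instead of its $L^2$ consequence. The dual of $L^1(0,T;L^2)$ is $L^\infty(0,T;L^2)$, so an $L^1_tL^2_x$ observability bound should give an $L^\infty_tL^2_x$ control.

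First, I transfer \eqref{eq:mixed-observability} to the adjoint problem by the time reversal \eqref{eq:time-reversal} used in the proof of \cref{thm:main,thm:short-time}. Every solution $p_q$ of \cref{eq:adjoint} with terminal value $q$ then satisfies
\[
\lVert p_q(0)\rVert_H\leq C_*\int_0^T\lVert p_q(t)\rVert_{L^2(D_t)}\,\mathrm dt
= C_*\lVert \mathbf 1_Dp_q\rVert_{L^1(0,T;H)}.
\]
Under the slice condition, $C_*$ may be replaced by $C_2e^{C_3/T}$ from \eqref{eq:forward-short-time}. Both constants are uniform in $\lVert b\rVert_\infty\leq M$.

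Second, I set up the duality. Let $\mathcal Y=L^1(0,T;H)$ and $\mathcal X=\{\mathbf 1_Dp_q:q\in H\}\subset\mathcal Y$. Define $\Lambda(\mathbf 1_Dp_q)=-\langle y_0,p_q(0)\rangle_H$. By the inequality above, $\Lambda$ is well defined, since two terminal data with equal restrictions give equal $p(0)$. It is also bounded, with norm at most $C_*\lVert y_0\rVert_H$. Hahn--Banach extends $\Lambda$ to $\mathcal Y$ with the same norm.

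Third, I represent the extension. I use that the dual of $L^1(0,T;H)$ is $L^\infty(0,T;H)$, which holds because $H$ is a Hilbert space and so has the Radon--Nikodym property. This gives $\tilde u\in L^\infty(0,T;H)$ with $\lVert\tilde u\rVert_{L^\infty(0,T;H)}\leq C_*\lVert y_0\rVert_H$ and
\[
\int_0^T\langle \tilde u(t),\mathbf 1_Dp_q(t)\rangle_H\,\mathrm dt=-\langle y_0,p_q(0)\rangle_H .
\]
I then set $u=\mathbf 1_D\tilde u$. This does not increase the norm. Since $\mathbf 1_D^2=\mathbf 1_D$, the same identity holds with $u$ in place of $\tilde u$. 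Moreover, $u\in L^2(D)$ because $L^\infty(0,T;H)\subset L^2(0,T;H)$ on a bounded interval.

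Fourth, I conclude with the duality identity \eqref{eq:control-duality}. It gives $\langle y(T),q\rangle_H=0$ for all $q\in H$, so $y(T)=0$. This yields \eqref{eq:bounded-control} with $C_\infty=C_*$, and with $C_\infty\leq C_2e^{C_3/T}$ under the slice condition.

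The only delicate point is the identification of the dual of $L^1(0,T;H)$ as $L^\infty(0,T;H)$ and the measurability of the representative. If I want to avoid the vector-valued theory, I can argue directly. Since $H$ is separable, $L^1(0,T;H)$ is isometric to $L^1((0,T);\ell^2)$. Any bounded functional on it is then given by a weakly measurable, hence strongly measurable, function bounded in norm almost everywhere, by applying scalar $L^1$--$L^\infty$ duality coordinatewise in the basis $(e_j)$.
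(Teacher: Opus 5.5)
Your proposal is correct and follows essentially the same route as the paper. Both arguments time-reverse the mixed estimate \eqref{eq:mixed-observability} into an $L^1(0,T;H)$ observability bound for $\mathbf 1_Dp_q$, extend the functional $\mathbf 1_Dp_q\mapsto-\langle y_0,p_q(0)\rangle_H$ by Hahn--Banach, represent it via $(L^1(0,T;H))^*=L^\infty(0,T;H)$, set $u=\mathbf 1_D v$ where $v\in L^\infty(0,T;H)$ is the representing function, and conclude with \eqref{eq:control-duality}; your coordinatewise justification of the dual identification is a harmless extra.
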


\begin{proof}
After time reversal, \eqref{eq:mixed-observability} gives
\[
 \lVert p_q(0)\rVert_H
 \leq
 C_\infty
 \lVert \mathbf 1_Dp_q\rVert_{L^1(0,T;H)}.
\]
Hence the functional
$
 \mathbf 1_Dp_q
 \longmapsto
 -\langle y_0,p_q(0)\rangle_H
$
is well defined on
$\{\mathbf 1_Dp_q:q\in H\}\subset L^1(0,T;H)$ and satisfies
\[
 \bigl|\langle y_0,p_q(0)\rangle_H\bigr|
 \leq
 C_\infty\lVert y_0\rVert_H
 \lVert\mathbf 1_Dp_q\rVert_{L^1(0,T;H)}.
\]
By the Hahn--Banach theorem, this functional extends to
$L^1(0,T;H)$ with the same norm. Since $H$ is a separable Hilbert
space, we have the isometric identification
$(L^1(0,T;H))^*=L^\infty(0,T;H)$ under the integral pairing.
The extension is therefore represented by some
$v\in L^\infty(0,T;H)$. Thus
\[
 \lVert v\rVert_{L^\infty(0,T;H)}
 \leq C_\infty\lVert y_0\rVert_H
\]
and
\[
 \int_0^T
 \langle v(t),\mathbf 1_{D_t}p_q(t)\rangle_H\,\mathrm dt
 =
 -\langle y_0,p_q(0)\rangle_H.
\]

Set
$
 u(t,x)=\mathbf 1_D(t,x)v(t,x).
$
Then
\[
 \lVert u\rVert_{L^\infty(0,T;H)}
 \leq
 \lVert v\rVert_{L^\infty(0,T;H)}
 \leq
 C_\infty\lVert y_0\rVert_H,
\]
and $u\in L^2((0,T)\times I)$ since $T<\infty$. Moreover,
\[
 \int_Du p_q\,\mathrm dt\,\mathrm dx
 =
 -\langle y_0,p_q(0)\rangle_H.
\]
The duality identity \eqref{eq:control-duality} therefore gives
$y(T)=0$.

Under the slice assumptions of \cref{thm:short-time},
\eqref{eq:forward-short-time} yields
$C_\infty\leq C_2e^{C_3/T}$ with $C_2,C_3$ depending only on
$L,T_0,M,m$.
\end{proof}

We conclude with two examples showing that the observation set need
not contain any open spatial region. Let $K\subset I$ be a closed
nowhere-dense set with $|K|>0$. Then
\[
 D=(0,T)\times K
\]
satisfies the assumptions of \cref{thm:main,thm:short-time}, with
$|D_t|=|K|$ for every $t\in(0,T)$.

Observation sets need not have product structure either. Let
$\mathbb T=\mathbb R/\mathbb Z$ be equipped with normalized Lebesgue
measure, and let $K\subset\mathbb T$ be closed and nowhere dense with
$0<|K|<1$. Define
\begin{equation}\label{eq:moving-cantor}
 D_K=
 \left\{
 (t,x)\in(0,T)\times(0,L):
 \frac{x}{L}-\frac{t}{T}\pmod 1\in K
 \right\}.
\end{equation}
For every $t\in(0,T)$,
\[
 |(D_K)_t|=L|K|,
\]
while $(D_K)_t$ contains no open interval. Hence
\cref{thm:short-time} applies with $m=L|K|$, with constants uniform
over all real potentials satisfying $\lVert b\rVert_\infty\leq M$.

\begin{proposition}\label{prop:nonproduct}
The set $D_K$ in \cref{eq:moving-cantor} contains no product of two
positive-measure sets. More precisely, there do not exist measurable
sets $F\subset(0,T)$ and $E\subset(0,L)$ with $|F|,|E|>0$ such that
\[
 |(F\times E)\setminus D_K|=0.
\]
\end{proposition}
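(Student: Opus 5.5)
We argue by contradiction and reduce the question to a statement about sums of sets on the circle $\mathbb T$. Suppose $F\subset(0,T)$ and $E\subset(0,L)$ are measurable, $|F|,|E|>0$, and $|(F\times E)\setminus D_K|=0$. We change variables to $\tau=t/T$ and $\xi=x/L$, viewing both as points of $\mathbb T$. Set $F'=\{t/T:t\in F\}$ and $E'=\{x/L:x\in E\}$; both have positive measure in $\mathbb T$. The hypothesis then says that $\xi-\tau\in K$ for almost every $(\tau,\xi)\in F'\times E'$.

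The key step applies Steinhaus's theorem, in its convolution form, on the compact group $\mathbb T$. Define
\[
 \phi(y)=\int_{\mathbb T}\mathbf 1_{E'}(y+\tau)\,\mathbf 1_{F'}(\tau)\,\mathrm d\tau .
\]
This is the convolution of $\mathbf 1_{E'}$ with a reflected copy of $\mathbf 1_{F'}$. It is continuous on $\mathbb T$ and satisfies $\int_{\mathbb T}\phi=|E'||F'|>0$. Hence the open set $G=\{\phi>0\}$ is nonempty.

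Next we relate $G$ to $K$ through Fubini. Apply the measure-preserving change of variables $(\tau,\xi)\mapsto(\tau,y)$ with $y=\xi-\tau$. The hypothesis becomes
\[
 \int_{\mathbb T}\phi(y)\,\mathbf 1_{\mathbb T\setminus K}(y)\,\mathrm dy
 =\bigl|\{(\tau,\xi)\in F'\times E':\xi-\tau\notin K\}\bigr|=0 .
\]
So $\phi=0$ almost everywhere on $\mathbb T\setminus K$, which gives $|G\setminus K|=0$.

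Now we use that $K$ is closed and nowhere dense. Since $G$ is nonempty and open and $K$ is nowhere dense, $G\setminus K$ is a nonempty open set, because $G$ cannot be contained in $K$. A nonempty open set has positive measure, which contradicts $|G\setminus K|=0$. Therefore no such product $F\times E$ exists.

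The main obstacle is the bookkeeping of the identification with $\mathbb T$. The map $x\mapsto x/L$ sends $(0,L)$ onto $(0,1)$, which represents $\mathbb T$ injectively. Measures are rescaled by the positive constants $1/T$ and $1/L$, so null sets and positive-measure sets are preserved and the hypothesis transfers correctly. Continuity of $\phi$ is standard: it follows from continuity of translation in $L^1(\mathbb T)$.
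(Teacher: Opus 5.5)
Your proof is correct and follows essentially the same route as the paper: the same rescaling to $\mathbb T$, the same continuous correlation function $\phi$ (the paper's $h$) with positive integral that vanishes a.e.\ on the open set $\mathbb T\setminus K$, and the same contradiction with the nowhere density of $K$. The only difference is cosmetic: you finish with the nonempty open set $\{\phi>0\}\setminus K$, which would have measure zero, whereas the paper argues that $h$ vanishes on the dense open set $\mathbb T\setminus K$ and hence, by continuity, everywhere.
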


\begin{proof}
Suppose that such sets $F$ and $E$ exist. After the changes of variables
$\alpha=t/T$ and $\beta=x/L$, we obtain measurable sets
$F_0,E_0\subset\mathbb T$ of positive measure such that
$
 \beta-\alpha\in K
$
for almost every $(\alpha,\beta)\in F_0\times E_0$.

Define
\[
 h(s)=\int_{\mathbb T}
 \mathbf 1_{F_0}(\alpha)\mathbf 1_{E_0}(s+\alpha)\,\mathrm d\alpha.
\]
The function $h$ is nonnegative and continuous, since translations are
continuous in $L^2(\mathbb T)$. Moreover,
\[
 \int_{\mathbb T}h(s)\,\mathrm ds
 =|F_0||E_0|>0.
\]
On the other hand, the assumed product containment gives
\[
 \int_{\mathbb T\setminus K}h(s)\,\mathrm ds=0.
\]
Since $h\geq0$ and $\mathbb T\setminus K$ is open, continuity implies
that $h=0$ on $\mathbb T\setminus K$. This set is dense because $K$ is
nowhere dense. Hence $h=0$ on all of $\mathbb T$, contradicting the
strictly positive integral above.
\end{proof}

For instance, when $M>0$, the potential
\[
 b(t,x)=M\mathbf 1_{D_K}(t,x)
\]
is real, bounded, and discontinuous along the moving set $D_K$.
Both main theorems apply to this coefficient and to the observation set
$D_K$.

\end{document}